\pgfplotsset{compat=newest}
\pgfplotsset{my style/.append style={
               axis x line=middle, 
               axis y line= middle,
               xlabel={$x$}, 
               ylabel={$y$},
               legend pos = south east,
               label style={font=\footnotesize},
               tick label style={font=\tiny}}}
\newtheorem{theorem}{Theorem}
\newtheorem{lemma}[theorem]{Lemma}
\newtheorem{corollary}[theorem]{Corollary}
\newtheorem{proposition}[theorem]{Proposition}
\newtheorem{sublemma}{}[theorem]
\newtheorem*{theorem*}{Theorem}
\theoremstyle{definition}
\theoremstyle{remark}
\definecolor{mygray}{RGB}{105, 105, 105}
\numberwithin{equation}{section}
\newcommand{\del}{\backslash}
\newcommand{\I}{\mathcal{I}}
\newcommand{\cl}{\text{cl}}
\newcommand{\sgn}{\text{sgn}}
\newdimen\GridSize
\tikzset{
    GridSize/.code={\GridSize=#1},
    GridSize=6pt
}
\newdimen\LineSpace
\tikzset{
    line space/.code={\LineSpace=#1},
    line space=6.5pt
}
\begin{document}

\title{Weak maps and the Tutte polynomial}

\author{Christine Cho and James Oxley}
\address{Mathematics Department\\
  Louisiana State University\\
  Baton Rouge, Louisiana}
\email{ccho3@lsu.edu}

\subjclass{05B35}
\date{\today}
\keywords{matroid, weak map, Tutte polynomial}

\begin{abstract}
    Let $M$ and $N$ be matroids such that $N$ is the image of $M$ under a rank-preserving weak map. 
    Generalizing results of Lucas, we prove that, for $x$ and $y$ positive, $T(M;x,y)\geq T(N;x,y)$ if and only if $x+y\geq xy$ or $M\cong N$. 
    We give a number of consequences of this result. 
\end{abstract}

\maketitle
\section{Introduction}

Terminology and notation used here will follow \cite{ox} unless otherwise stated.
Given two rank-$r$ matroids $M$ and $N$, a bijective map from $E(M)$ to $E(N)$ is a \textit{rank-preserving weak map} if every basis of $N$ is the image of a basis of $M$. 
We write $M\xrightarrow{rp} N$ if $N$ is a rank-preserving weak-map image of $M$. 

The following theorem of Lucas \cite{lucas} shows that the numbers of bases, independent sets, and spanning sets of $M$ are greater than the corresponding numbers for $N$ if $M\xrightarrow{rp} N$.
Note that $T(M;1,1)$, $T(M;2,1)$, and $T(M;1,2)$ count the numbers of bases, independent sets, and spanning sets of $M$, respectively, where $T(M;x,y)$ is the Tutte polynomial of $M$. 

    \begin{theorem}\label{lucas}
        If $M\not\cong N$ and $M\xrightarrow{rp}N$, then  
        \begin{enumerate}[label=(\roman*)]
           \item $T(M;1,1)>T(N;1,1)$;
           \item $T(M;2,1)>T(N;2,1)$;
           \item $T(M;1,2)>T(N;1,2)$; 
           \item $T(M;x,0)\geq T(N;x,0)$ for all $x>0$ unless $M$ has a loop;
           \item $T(M;0,y)\geq T(N;0,y)$ for all $y>0$ unless $M$ has a coloop. 
       \end{enumerate} 
    \end{theorem}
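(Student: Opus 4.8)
\emph{Proof plan.} The plan is to recast the weak-map hypothesis as a containment of basis families and then split the five evaluations into an easy group, (i)--(iii), and a harder group, (iv)--(v).

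First I would fix a rank-preserving weak map $\phi\colon E(M)\to E(N)$ and identify $E(N)$ with $E(M)$ along $\phi$, writing $E$ for the common ground set. Since every basis of $N$ is the $\phi$-image of a basis of $M$ and $\phi$ is a bijection between matroids of the same rank $r$, this identification turns the hypothesis into $\mathcal{B}(N)\subseteq\mathcal{B}(M)$, hence also $\mathcal{I}(N)\subseteq\mathcal{I}(M)$ and every spanning set of $N$ is spanning in $M$. Because a matroid is determined by its bases, $|\mathcal{B}(M)|=|\mathcal{B}(N)|$ would force $M=N$; so when $M\not\cong N$ all three containments are proper, and (i)--(iii) follow at once from the fact that $T(\cdot;1,1)$, $T(\cdot;2,1)$ and $T(\cdot;1,2)$ count bases, independent sets and spanning sets. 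I would also record for later that every loop of $M$ is a loop of $N$ and every coloop of $M$ a coloop of $N$ (a loop lies in no basis, a coloop in every basis), and that $\mathcal{B}(N^*)\subseteq\mathcal{B}(M^*)$; thus the statement is self-dual and, since $T(M^*;x,y)=T(M;y,x)$, part (v) reduces to part (iv).

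For (iv) I would first clear the degenerate cases: if $M$ has a loop then so does $N$ and both sides vanish (so the restriction ``unless $M$ has a loop'' is actually harmless), and if $N$ has a loop but $M$ does not then $T(N;x,0)=0$ while Tutte's activity expansion $T(M;x,y)=\sum_{B\in\mathcal{B}(M)}x^{i(B)}y^{e(B)}$ gives $T(M;x,0)=\sum_{B:\,e(B)=0}x^{i(B)}\ge 0$ for $x>0$. So assume $M$ and $N$ loopless and fix a common linear order on $E$. The real target is that $T(M;x,0)$ dominates $T(N;x,0)$ coefficientwise, which, as both polynomials have nonnegative coefficients, gives the inequality for all $x>0$. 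Two pieces of this are within reach. For $0<x\le1$ the activity expansion suffices: for a common basis $B$ the fundamental circuit and cocircuit in $N$ are contained in those in $M$ (removing bases only shrinks them), so $e_M(B)\le e_N(B)$ and $i_M(B)\le i_N(B)$, whence, using $x\le1$,
\[
T(N;x,0)=\sum_{B\in\mathcal{B}(N):\, e_N(B)=0} x^{i_N(B)}\ \le\ \sum_{B\in\mathcal{B}(N):\, e_N(B)=0} x^{i_M(B)}\ \le\ \sum_{B\in\mathcal{B}(M):\, e_M(B)=0} x^{i_M(B)}=T(M;x,0).
\]
Also, Whitney's formula $T(M;x,0)=\sum_{A\in\mathrm{NBC}(M)}(x-1)^{r-|A|}$ together with the inclusion $\mathrm{NBC}(N)\subseteq\mathrm{NBC}(M)$ (if $C$ is a circuit of $M$ with least element $e$, then $C$ is dependent in $N$, hence contains a circuit $C'$ of $N$, and $C'\setminus\min C'$ is a broken circuit of $N$ inside $C\setminus e$) shows $T(M;x,0)-T(N;x,0)=\sum_{A\in\mathrm{NBC}(M)\setminus\mathrm{NBC}(N)}(x-1)^{r-|A|}$.

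The hard part will be the coefficientwise inequality for general $x$ --- equivalently, nonnegativity of that last sum for all $x>0$, equivalently monotonicity under the weak map of the $h$-vector of the broken-circuit complex. I would attempt induction on $|E|$ via $T(M;x,0)=T(M\setminus e;x,0)+T(M/e;x,0)$, choosing $e$ so the smaller matroids stay loopless and weak-map related: if $e$ is a coloop of both $M$ and $N$, factor off $x$ and recurse on $M/e\xrightarrow{rp}N/e$; if a coloop of neither, recurse on both $M\setminus e\xrightarrow{rp}N\setminus e$ and $M/e\xrightarrow{rp}N/e$. The genuinely delicate case --- where I expect the main obstacle --- is when $e$ is a coloop of $N$ but not of $M$ (the dual case following by the duality reduction): then $e$ has no parallel in $M$, so $M/e$ is loopless and $\mathcal{I}(N/e)\subseteq\mathcal{I}(M\setminus e)$, but the corank drops, and one must show this rank-mismatched containment still forces $T(M\setminus e;x,0)+T(M/e;x,0)$ to dominate $x\,T(N/e;x,0)$ coefficientwise. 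An alternative would be to realize $\mathrm{NBC}(N)$ as a relatively shellable subcomplex of $\mathrm{NBC}(M)$, so that $T(M;x,0)-T(N;x,0)$ is a relative $h$-polynomial, hence with nonnegative coefficients; constructing that shelling is the sticking point there.
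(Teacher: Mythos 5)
The paper does not prove Theorem~\ref{lucas}; it quotes it verbatim as a known result of Lucas \cite{lucas}, so there is no in-paper argument to compare yours against. (The authors' own Theorem~\ref{main}, once proved, would recover (i)--(iii) by specializing at $(1,1)$, $(2,1)$, $(1,2)$ and (iv)--(v) by continuity as $y\to0^+$ or $x\to0^+$, but that is not a proof of Lucas's theorem.)

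Your argument for (i)--(iii) from the proper containments $\mathcal{B}(N)\subsetneq\mathcal{B}(M)$, $\mathcal{I}(N)\subsetneq\mathcal{I}(M)$ and the proper containment of spanning-set families is correct, as are the duality reduction of (v) to (iv), the loop observations, and the activity computation for $0<x\le 1$ (using that for $B\in\mathcal{B}(N)\subseteq\mathcal{B}(M)$ one has $C_N(e,B)\subseteq C_M(e,B)$ and $C^*_N(e,B)\subseteq C^*_M(e,B)$, whence $e_M(B)\le e_N(B)$ and $i_M(B)\le i_N(B)$). The flaw is that you then declare a gap that is not actually there: you already wrote
\[
T(M;x,0)-T(N;x,0)=\sum_{A\in\mathrm{NBC}(M)\setminus\mathrm{NBC}(N)}(x-1)^{r-|A|},
\]
and every $A$ in that sum, being a no-broken-circuit set, is independent in $M$, so $|A|\le r$ and $r-|A|\ge 0$. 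Hence every summand is nonnegative as soon as $x\ge 1$, and the difference is $\ge0$ for all $x\ge1$ with no further work. Combined with your $0<x\le1$ activity argument this already proves (iv) for all $x>0$. You do not need the coefficientwise inequality $T(M;x,0)\ge T(N;x,0)$; that is a strictly stronger assertion, your two ``equivalently''s are not equivalences, and the relative-shellability program is unnecessary effort for this theorem. In short: assemble the two cases $0<x\le1$ (activities) and $x\ge1$ (term-by-term nonnegativity in the Whitney expansion, using $\mathrm{NBC}(N)\subseteq\mathrm{NBC}(M)$ for a common linear order on loopless $M,N$), and your proof is complete and elementary.
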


The main result of the paper is the following generalization of the last theorem. 

    \begin{theorem}\label{main}
        Let $x$ and $y$ be positive real numbers. Let $M$ and $N$ be matroids such that there is a rank-preserving weak map from $M$ to $N$. Then $T(M;x,y)\geq T(N;x,y)$ if and only if $x+y\geq xy$ or $M\cong N$. Moreover, if $M\not\cong N$, then $T(M;x,y)>T(N;x,y)$ if and only if $x+y>xy$.  
    \end{theorem}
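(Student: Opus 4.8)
The plan is to work from the corank–nullity (Whitney rank generating function) form of the Tutte polynomial,
\[
T(M;x,y)=\sum_{A\subseteq E(M)}(x-1)^{\,r(M)-r_M(A)}(y-1)^{\,|A|-r_M(A)},
\]
and likewise for $N$. Fix a bijection $\varphi\colon E(M)\to E(N)$ realizing the weak map, so that $r_N(\varphi(A))\le r_M(A)$ for all $A\subseteq E(M)$ while $r(N)=r(M)$. Put $c_M(A)=r(M)-r_M(A)$, $n_M(A)=|A|-r_M(A)$, and $k(A)=r_M(A)-r_N(\varphi(A))\ge 0$; then $c_N(\varphi(A))=c_M(A)+k(A)$ and $n_N(\varphi(A))=n_M(A)+k(A)$. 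Reindexing the expansion of $T(N;x,y)$ by $A\mapsto\varphi(A)$ and using $1-z^{k}=(1-z)(1+z+\cdots+z^{k-1})$ with $z=(x-1)(y-1)$ and $1-(x-1)(y-1)=x+y-xy$, I would obtain the key identity
\[
T(M;x,y)-T(N;x,y)=(x+y-xy)\,G(x,y),\qquad
G(x,y)=\sum_{A\subseteq E(M)}\ \sum_{i=0}^{k(A)-1}(x-1)^{\,c_M(A)+i}(y-1)^{\,n_M(A)+i}.
\]

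If $M\cong N$, one may take $\varphi$ to be an isomorphism, so every $k(A)=0$, $G\equiv 0$, and $T(M;x,y)=T(N;x,y)$ identically, which disposes of the ``$M\cong N$'' alternative. Assume henceforth $M\not\cong N$. Since a $\varphi$ with all $k(A)=0$ would be an isomorphism, some basis $B$ of $M$ has $\varphi(B)$ not a basis of $N$, and evaluating $G$ at $(1,1)$—where the only surviving monomials come from $i=0$ and $A$ a basis of $M$—gives $G(1,1)=\#\{\text{bases }B\text{ of }M:\varphi(B)\text{ is not a basis of }N\}>0$. The whole theorem then reduces to one claim: \emph{$G(x,y)$ has nonnegative coefficients as a polynomial in $x$ and $y$.} Granting this, $G$ is a nonzero polynomial with nonnegative coefficients, so $G(x,y)>0$ for all $x,y>0$; hence $T(M;x,y)-T(N;x,y)$ has the same sign as $x+y-xy$ on the whole open positive quadrant, which is exactly Theorem~\ref{main}, ``moreover'' clause included.

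To prove the nonnegativity claim I would induct on $|E(M)|$, writing $G_{M,N}$ for $G$ and using that $G_{M,N}$ depends only on $T(M)$ and $T(N)$. If $M$ has a loop $e$, then $\varphi(e)$ is a loop of $N$ (its rank is $\le r_M(\{e\})=0$), and $T(M)=y\,T(M\setminus e)$, $T(N)=y\,T(N\setminus\varphi(e))$, with $M\setminus e\xrightarrow{rp}N\setminus\varphi(e)$, so $G_{M,N}=y\,G_{M\setminus e,\,N\setminus\varphi(e)}$; the coloop case is dual and factors out an $x$. If $M$ is loopless and coloopless and some $e$ has $\varphi(e)$ neither a loop nor a coloop of $N$, then deletion and contraction of $e$ both give rank-preserving weak maps $M\setminus e\xrightarrow{rp}N\setminus\varphi(e)$ and $M/e\xrightarrow{rp}N/\varphi(e)$ (the contraction is a weak map because $r_N(\varphi(e))=1=r_M(\{e\})$), so from $T(M)=T(M\setminus e)+T(M/e)$ and $T(N)=T(N\setminus\varphi(e))+T(N/\varphi(e))$ we get $G_{M,N}=G_{M\setminus e,\,N\setminus\varphi(e)}+G_{M/e,\,N/\varphi(e)}$. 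In each case the inductive hypothesis applies to strictly smaller ground sets.

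The remaining configuration—$M$ loopless and coloopless but \emph{every} element of $N$ a loop or a coloop, so $N=U_{0,p}\oplus U_{q,q}$ with $q=r(M)$ and $p\ge 1$—is the crux, and I expect it (and the routine checks that every deletion/contraction above stays rank-preserving) to be the main obstacle. Here $B:=\varphi^{-1}(\text{coloops of }N)$ is a basis of $M$; choosing $e\in B$, the element $\varphi(e)$ is a coloop of $N$, so $T(N)=x\,T(N/\varphi(e))$ with $N/\varphi(e)=U_{0,p}\oplus U_{q-1,q-1}$, and $M/e\xrightarrow{rp}N/\varphi(e)$ is rank-preserving. Combining with $T(M)=T(M\setminus e)+T(M/e)$ yields
\[
G_{M,N}=G_{M/e,\,N/\varphi(e)}+\frac{T(M\setminus e)-(x-1)\,T(N/\varphi(e))}{x+y-xy}.
\]
Finally $M\setminus e\xrightarrow{rp}U_{0,p-1}\oplus U_{q,q}$ (send a basis of $M\setminus e$ to the coloops and the rest to the loops), and since $T(U_{0,p-1}\oplus U_{q,q})=x^{q}y^{p-1}$ a one-line computation identifies the last fraction with $G_{M\setminus e,\,U_{0,p-1}\oplus U_{q,q}}+x^{q-1}y^{p-1}$. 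Thus $G_{M,N}$ is again a sum of $G$'s over smaller ground sets plus a monomial, which closes the induction and completes the proof.
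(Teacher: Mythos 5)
Your proof is correct, and it takes a genuinely different and in fact stronger route than the paper's. The paper argues directly by deletion--contraction induction that $\operatorname{sgn}(T(M)-T(N))=\operatorname{sgn}(x+y-xy)$ on the open positive quadrant, with a separate preliminary induction (Lemma~\ref{lc}) handling the terminal case $N\cong U_{k,k}\oplus U_{0,m}$. You instead produce the explicit factorization
\[
T(M;x,y)-T(N;x,y)=(x+y-xy)\,G(x,y),\qquad G(x,y)=\sum_{A\subseteq E(M)}\sum_{i=0}^{k(A)-1}(x-1)^{c_M(A)+i}(y-1)^{n_M(A)+i},
\]
obtained from the rank-generating expansion by the geometric-series identity $1-z^k=(1-z)(1+\cdots+z^{k-1})$ with $z=(x-1)(y-1)$, and then prove the stronger structural claim that $G$ has nonnegative \emph{integer} coefficients in $x,y$. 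I checked the identity and the inductive decompositions: loops and coloops of $M$ contribute factors of $y$ and $x$; an element $e$ with $\varphi(e)$ neither a loop nor a coloop of $N$ gives $G_{M,N}=G_{M\setminus e,N\setminus\varphi(e)}+G_{M/e,N/\varphi(e)}$; and in the terminal case $N\cong U_{0,p}\oplus U_{q,q}$ with $p\geq 1$, choosing $e$ mapping to a coloop and using $T(U_{0,p-1}\oplus U_{q,q})=x^qy^{p-1}$ indeed yields $G_{M,N}=G_{M/e,N/\varphi(e)}+G_{M\setminus e,U_{0,p-1}\oplus U_{q,q}}+x^{q-1}y^{p-1}$, which both closes the induction and exhibits $G$ as nonzero. (Your $G(1,1)=b(M)-b(N)>0$ observation gives an independent proof of nonvanishing.) What the paper's route buys is brevity and the reuse of its Lemma~\ref{lc}; what your route buys is the reusable quotient polynomial $G$ with nonnegative coefficients, which cleanly packages the sign of $T(M)-T(N)$ in one object and absorbs the $U_{k,k}\oplus U_{0,m}$ case into the same induction via the extra monomial. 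All the ``routine'' rank-preservation checks you deferred do go through, as does the required fact that loops (resp.\ coloops) of $M$ map to loops (resp.\ coloops) of $N$.
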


Jaeger, Vertigan, and Welsh \cite{jaeger} proved that the problem of evaluating the Tutte polynomial of a graphic matroid at a point $(x,y)$ in the first quadrant of the real plane is \#P-hard unless $x+y=xy$ or $(x,y)=(1,1)$. 
Evidently $x+y=xy$ if and only if $(x,y)$ is a point on the hyperbola $H_1$ defined by the equation $(x-1)(y-1)=1$.  
It is straightforward to prove that $T(M;x,y)=(x-1)^{r(M)}y^{|E|}$ for all $(x,y)\in H_1$. 
Therefore, for any two matroids $M$ and $N$ that have the same rank and the same ground set, $T(M;x,y)=T(N;x,y)$ for all $(x,y)\in H_1$, that is, for all $(x,y)$ for which $x+y=xy$. Theorem \ref{main} is summarized in the following figure. 
\begin{figure}[ht]
\begin{center}
\begin{tikzpicture}[domain=0:7]
\begin{axis}[
	axis lines = middle,
    x label style={at={(axis description cs:1.05,0.0667)},anchor=center},
	xlabel = {$x$},
    y label style={at={(axis description cs:0.0667,1.05)},anchor=center},
	ylabel = {$y$},
    xtick={2},
    ytick={2},
	xmin=-.5, xmax=7,
	ymin=-0.5, ymax=7, 
    clip mode=individual]
    \addlegendimage{area legend, pattern=my north west lines} 
    \addlegendimage{stealth-stealth, thick,black}
    \addlegendimage{area legend, pattern = grid}      
    
\addplot [name path = A,
	stealth-stealth,
	domain = 1.1:7,
	samples = 100,color=black,very thick] {x/(x-1)} 
	node [very near end, above right] {};
    
\addplot [color=black, name path = B,
	-stealth,
	domain = 0:7] {0} 
    node [pos=1,above left] {};
\addplot [color=white, name path = C,
	domain = 0:1.13] {10};
\addplot [color=white, name path = D,
	domain = 1.11:10] {10};

\node[circle,fill,inner sep=2pt] at (axis cs:2,2) {};

\draw [-latex, color=black,-stealth] (axis cs:{0,0}) -- (axis cs:{0,7}) node[below right] {} ;

  \addplot[pattern=MyGrid, pattern color=black!50]fill between[of=D and A, soft clip={domain=1.1:10}]; 
  \addplot [pattern=my north west lines, pattern color = black!50] fill between [of = A and B, soft clip={domain=1.1:7}];
  \addplot [pattern=my north west lines, pattern color = black!50] fill between [of = B and C, soft clip={domain=0.01:1.12}];
  \addlegendentry{\footnotesize$T(M;x,y) > T(N;x,y)$};
  \addlegendentry{\footnotesize$T(M;x,y) = T(N;x,y)$};
  \addlegendentry{\footnotesize$T(M;x,y) < T(N;x,y)$};

  \node[anchor=west,overlay] () at (axis cs:7,1.1667) {$x+y =  xy$};
  \end{axis}
\end{tikzpicture}
\end{center}
\caption{A summary of Theorem \ref{main}.}
\end{figure}
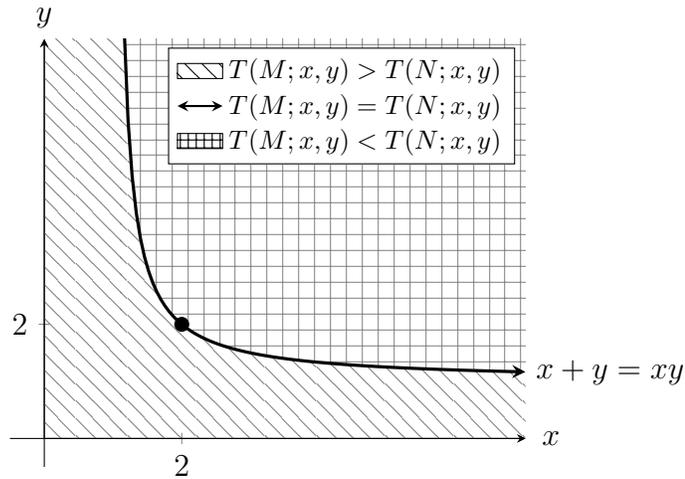

\begin{figure}
    \centering
    \begin{tikzpicture}[rotate={90}]
    \filldraw (0,0) circle (4pt) node[below right=1pt] {$x_1$};
    \filldraw (0.5,1) circle (4pt) node[below=4pt] {$x_2$};
    \filldraw (1.5,1) circle (4pt) node[above=4pt] {$x_4$};
    \filldraw (.87,1.95) circle (4pt) node[below left= 2pt] {$x_3$};
    \filldraw (1.13,1.95) circle (4pt) node[above left=1pt] {$x_{3}'$};
    \filldraw (2,0) circle (4pt) node[above right=1pt] {$x_5$};
    \filldraw (1,-.75) circle (4pt) node[right=3pt] {$x_6$};

    \draw[thick] (0,0)--(1,2);
    \draw[thick] (1,2)--(2,0);
\end{tikzpicture}
\caption{A rank-3 matroid $M$.}\label{qsix}
\end{figure}
Let $f$ and $g$ be distinct elements in a matroid $M$. The element $f$ is \textit{freer} than the element $g$ if $g$ is contained in the closure of every circuit containing $f$. For example, in the rank-3 matroid $M$ for which a geometric representation is shown in Figure \ref{qsix}, the element $x_6$ is \textit{free}, that is, every circuit containing $x_6$ is spanning. Hence $x_6$ is freer than every element in $E(M)-\{x_6\}$. The element $x_1$ is freer than both $x_3$ and $x_3'$. Since $\{x_3,x_3'\}$ is a parallel class, it is straightforward to see that $x_3$ is freer than $x_3'$, and $x_3'$ is freer than $x_3$. Similarly, $x_1$ is freer than $x_2$, and $x_2$ is freer than $x_1$. By contrast, although $x_1$ and $x_5$ are symmetric in $M$, neither is freer than the other.

As a consequence of Theorem \ref{main}, we deduce that if $f$ is freer than $g$ in a matroid $M$, then the numbers of bases, circuits, and hyperplanes of $M$ containing $f$ are at least as large as the corresponding numbers of sets containing $g$. 
The next section presents some preliminaries. 
The main result is proved in Section 3.
The last section contains consequences of the main theorem. 

\section{Preliminaries}
The \textit{nullity} of a matroid $M$ is equal to $|E(M)|-r(M)$. 
For matroids $M$ and $N$, a bijection $\varphi:E(M) \to E(N)$ is a \textit{weak map} if $\varphi^{-1}(I)\in \I(M)$ whenever $I \in \I(N)$.
If $r(M)=r(N)$, then $\varphi$ is a \textit{rank-preserving weak map} from $M$ to $N$.
Although it is not required that a weak map be bijective, we will only consider bijective weak maps. 
Such maps have the following attractive property (see, for example, \cite[Corollary~7.3.13]{ox}).     

\begin{lemma}
   If $\varphi: M\to N$ is a rank-preserving weak map from $M$ to $N$, then $\varphi$ is a rank-preserving weak map from $M^*$ to $N^*$. 
\end{lemma}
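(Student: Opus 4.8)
The plan is to reduce everything to the characterization of rank-preserving weak maps in terms of bases (as in the introduction) together with the standard fact that the bases of a dual matroid are precisely the complements of the bases of the matroid. First I would record the bookkeeping: since $\varphi$ is a bijection, $|E(M)| = |E(N)|$, and since $\varphi$ is rank-preserving, $r(M) = r(N)$, so
\[
r(M^*) = |E(M)| - r(M) = |E(N)| - r(N) = r(N^*).
\]
Because $E(M^*) = E(M)$ and $E(N^*) = E(N)$, the map $\varphi$ is a bijection between the ground sets of the equal-rank matroids $M^*$ and $N^*$, so it will suffice to show that every basis of $N^*$ is the image under $\varphi$ of a basis of $M^*$.

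Next I would take an arbitrary basis $B^*$ of $N^*$ and write $B^* = E(N) \setminus B$ for some basis $B$ of $N$. Since $\varphi \colon M \to N$ is a rank-preserving weak map, $B = \varphi(A)$ for some basis $A$ of $M$; then $E(M) \setminus A$ is a basis of $M^*$, and, $\varphi$ being a bijection,
\[
\varphi(E(M) \setminus A) = E(N) \setminus \varphi(A) = E(N) \setminus B = B^*.
\]
Thus $B^*$ is the image of a basis of $M^*$, which is exactly what is needed.

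I do not anticipate a genuine obstacle: the argument is a short duality computation. The only points requiring a little care are the rank bookkeeping above and the (routine) equivalence between the introduction's basis-formulation of a rank-preserving weak map and the independent-set formulation recalled at the start of this section. As a consistency check, the statement can also be derived directly from the latter: if $I^* \in \I(N^*)$, then $E(N) \setminus I^*$ contains a basis $B$ of $N$, hence a set $\varphi(A)$ with $A \in \B(M)$; since $\varphi$ is a bijection, $A \cap \varphi^{-1}(I^*) = \emptyset$, so $\varphi^{-1}(I^*) \subseteq E(M) \setminus A \in \B(M^*)$, and therefore $\varphi^{-1}(I^*) \in \I(M^*)$.
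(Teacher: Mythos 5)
Your argument is correct. The paper itself gives no proof of this lemma, simply citing \cite[Corollary~7.3.13]{ox}; your duality computation (bases of the dual are complements of bases, combined with bijectivity and the rank bookkeeping) is the standard argument behind that reference, and your closing check that the basis formulation and the independent-set formulation of a rank-preserving weak map agree for bijections is exactly the routine point one needs to verify.
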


 For a matroid $M$ with ground set $E$, the \textit{Tutte polynomial} $T(M;x,y)$ of $M$ is defined by 
    \[
        T(M;x,y)=\displaystyle\sum_{A\subseteq E} (x-1)^{r(M)-r(A)}(y-1)^{|A|-r(A)}.
    \]
    It is well known that $T(M^*;x,y)=T(M;y,x)$ for a matroid $M$ and its dual $M^*$. In-depth accounts of the Tutte polynomial and its applications can be found in \cite{bry} and \cite{handbook}. In \cite[Exercise 6.10(b)]{bry}, it is noted that $T(M;x,y)=T(N;x,y)-xy+x+y$ if $M$ is obtained from $N$ by relaxing a circuit-hyperplane. Since relaxation is an example of a rank-preserving weak map, this adds to the plausibility of the main result.   

     Before proving the main result in general, we prove it in the specific case when $N$ is comprised solely of loops and coloops. Observe that if there is a rank-preserving weak map from a matroid $M$ to a matroid $N$, then every coloop of $M$ is a coloop of $N $, while every loop of $M$ is a loop of $N$. The proof of the following lemma uses the sign function $\text{sgn} : \mathbb{R} \to \{-1,0,1\}$ where 
        \[
           \text{sgn}(x)=\begin{cases}
               -1 & \text{ if } x<0,\\  
                0 & \text{ if } x=0,\\  
                1 & \text{ if } x>0.\\  
           \end{cases}
        \] 
\begin{lemma}\label{lc}
    Let $x>0$ and $y>0$. Let $M$ be a matroid with rank $k$, nullity $m$, and $|E|\geq 2$. Then $T(M;x,y)\geq x^ky^{m}$ if and only if $x+y\geq xy$ or $M\cong U_{k,k}\oplus U_{0,m}$. Moreover, if $M\not\cong U_{k,k}\oplus U_{0,m}$, then $T(M;x,y)>x^ky^m$ if and only if $x+y>xy$. 
\end{lemma}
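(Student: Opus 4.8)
The plan is to prove both assertions of the lemma simultaneously by induction on $n=|E(M)|\ge 2$. First I would dispose of the points on $H_1$: if $x+y=xy$ then $T(M;x,y)=(x-1)^{k}y^{k+m}$, and since $(x-1)(y-1)=1$ forces $(x-1)y=x$, this equals $x^{k}y^{m}=T(U_{k,k}\oplus U_{0,m};x,y)$, so both assertions hold trivially there. Thus I assume henceforth that $x+y\ne xy$, and I use the inductive hypothesis, together with the evident identity $T(U_{k',k'}\oplus U_{0,m'};x,y)=x^{k'}y^{m'}$, in the following trichotomous form: for every matroid $M'$ on at least two elements with rank $k'$ and nullity $m'$, one has $T(M';x,y)=x^{k'}y^{m'}$ if $M'\cong U_{k',k'}\oplus U_{0,m'}$, whereas $T(M';x,y)>x^{k'}y^{m'}$ if $x+y>xy$ and $M'\not\cong U_{k',k'}\oplus U_{0,m'}$, and $T(M';x,y)<x^{k'}y^{m'}$ if $x+y<xy$ and $M'\not\cong U_{k',k'}\oplus U_{0,m'}$.

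The base case $n=2$ is a direct check of the four matroids on a two-element ground set: $U_{2,2}$, $U_{1,1}\oplus U_{0,1}$, and $U_{0,2}$ are each of the form $U_{k,k}\oplus U_{0,m}$ with Tutte polynomial $x^{k}y^{m}$, while $U_{1,2}$ has Tutte polynomial $x+y$, which is greater than, equal to, or less than $xy=x^{1}y^{1}$ exactly according to the sign of $x+y-xy$; this is the required trichotomy since $U_{1,2}\not\cong U_{1,1}\oplus U_{0,1}$.

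For the inductive step, let $n\ge 3$. If $M$ has a loop $e$, then $T(M;x,y)=y\,T(M\backslash e;x,y)$, where $M\backslash e$ has rank $k$, nullity $m-1$, and $n-1\ge 2$ elements, and $M\backslash e\cong U_{k,k}\oplus U_{0,m-1}$ if and only if $M\cong U_{k,k}\oplus U_{0,m}$; since $y>0$, multiplying the trichotomy for $M\backslash e$ through by $y$ gives the trichotomy for $M$. If $M$ has no loop but has a coloop $e$, the same argument applies to $T(M;x,y)=x\,T(M/e;x,y)$, with $M/e$ of rank $k-1$ and nullity $m$ (alternatively, pass to the dual using $T(M;x,y)=T(M^{*};y,x)$ and the symmetry of the condition $x+y\ge xy$). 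Finally, suppose $M$ has neither a loop nor a coloop; then $k\ge 1$, $m\ge 1$, and $M\not\cong U_{k,k}\oplus U_{0,m}$, so I must show $T(M;x,y)>x^{k}y^{m}$ when $x+y>xy$ and $T(M;x,y)<x^{k}y^{m}$ when $x+y<xy$. Choosing any $e\in E(M)$, we have $T(M;x,y)=T(M\backslash e;x,y)+T(M/e;x,y)$, where $M\backslash e$ has rank $k$ and nullity $m-1$, $M/e$ has rank $k-1$ and nullity $m$, and each has $n-1\ge 2$ elements. The key point is the identity $x^{k}y^{m-1}+x^{k-1}y^{m}=x^{k-1}y^{m-1}(x+y)$, to be compared with $x^{k}y^{m}=x^{k-1}y^{m-1}(xy)$. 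If $x+y>xy$, the inductive hypothesis gives $T(M\backslash e;x,y)\ge x^{k}y^{m-1}$ and $T(M/e;x,y)\ge x^{k-1}y^{m}$, so $T(M;x,y)\ge x^{k-1}y^{m-1}(x+y)>x^{k-1}y^{m-1}(xy)=x^{k}y^{m}$; if $x+y<xy$, the inductive hypothesis gives $T(M\backslash e;x,y)\le x^{k}y^{m-1}$ and $T(M/e;x,y)\le x^{k-1}y^{m}$, so $T(M;x,y)\le x^{k-1}y^{m-1}(x+y)<x^{k}y^{m}$. This completes the induction.

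I do not expect a serious obstacle: the content is the deletion–contraction reduction in the loopless, coloopless case, after which the displayed identity turns the comparison of $T(M;x,y)$ with $x^{k}y^{m}$ into the comparison of $x+y$ with $xy$, and the rest is routine. The steps needing the most care are the preliminary separation of the hyperbola $x+y=xy$ (so that the inductive hypothesis supplies \emph{strict} inequalities that remain strict after summing), the bookkeeping of the exceptional matroid $U_{k,k}\oplus U_{0,m}$ under deleting a loop or contracting a coloop, and the remark that a loopless, coloopless matroid on at least three elements cannot be isomorphic to $U_{k,k}\oplus U_{0,m}$.
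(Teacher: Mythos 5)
Your proof is correct, and it takes a genuinely cleaner route through the inductive step than the paper does. The paper chooses an element $e$ that is neither a loop nor a coloop, applies deletion--contraction, and then conducts a structural subcase analysis on whether $M\backslash e$ is isomorphic to $U_{k,k}\oplus U_{0,m-1}$ (and, dually, whether $M/e\cong U_{k-1,k-1}\oplus U_{0,m}$); in the first of these subcases it even pins down $M$ as $U_{s,s+1}\oplus U_{k-s,k-s}\oplus U_{0,m-1}$ and splits further on $s=1$ versus $s\ge 2$. You sidestep all of that. By first stripping loops and coloops (where $T$ picks up a factor of $y$ or $x$ and the exceptional status of $M$ is preserved), you reduce to a loopless, coloopless $M$, for which \emph{every} element supports deletion--contraction and $M$ is automatically non-exceptional. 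Then the key observation is that the inductive hypothesis in its \emph{weak} form, $T(M')\geq x^{k'}y^{m'}$ when $x+y>xy$ regardless of whether $M'$ is exceptional, already suffices: the sum $x^{k}y^{m-1}+x^{k-1}y^{m}=x^{k-1}y^{m-1}(x+y)$ beats $x^{k}y^{m}=x^{k-1}y^{m-1}(xy)$ \emph{strictly}, so the strictness you need is supplied by the hyperbola condition rather than by the minors. This eliminates the paper's case analysis entirely. The price you pay is the upfront separation of the hyperbola $x+y=xy$, where you invoke the closed form $T(M;x,y)=(x-1)^{r(M)}y^{|E|}$; the paper avoids this by phrasing the induction directly in terms of $\sgn(T(M)-x^ky^m)=\sgn(x+y-xy)$, which handles the zero case implicitly. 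Both arguments are sound, but yours is easier to check and makes the role of the hyperbola more transparent.
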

\begin{proof}
     Suppose $M\not\cong U_{k,k}\oplus U_{0,m}$. We argue by induction on $|E|$ that $\text{sgn}(T(M)-x^ky^m)= \text{sgn}(x+y-xy)$, where we have abbreviated $T(M;x,y)$ as $T(M)$. 
    
    If $|E|=2$, then $M\cong U_{1,2}$ and $T(M;x,y)=x+y$. Therefore $\text{sgn}(T(M)-xy)=\text{sgn}(x+y-xy)$, and the result holds. Assume that the result holds for $|E|<n$ and let $|E|=n\geq 3$. Since $M\not\cong U_{k,k}\oplus U_{0,m}$, there is an element $e$ of $M$ that is neither a loop nor a coloop. 
    Thus $T(M)=T(M\del e)+T(M/e)$.  

    Suppose $M\del e\cong U_{k,k}\oplus U_{0,m-1}$. Then $M\cong U_{s,s+1}\oplus U_{k-s,k-s}\oplus U_{0,m-1}$ for some $s$ with $1\leq s\leq k$. If $s=1$, then $M/e\cong U_{k-1,k-1}\oplus U_{0,m}$. Thus $T(M\del e)=x^{k}y^{m-1}$ and $T(M/e)=x^{k-1}y^m$.
    Since $x>0$ and $y>0$,   
        \begin{align*}
            \sgn(T(M)-x^ky^m)&=\sgn(x^ky^{m-1}+x^{k-1}y^m-x^ky^m)\\
                                   &=\sgn(x^{k-1}y^{m-1}(x+y-xy))=\sgn(x+y-xy),
    \end{align*}
   as desired.  

    Suppose $s\geq 2$. Then $M/e\not\cong U_{k-1,k-1}\oplus U_{0,m}$ and, by the induction assumption,  
        \begin{align*}
        \sgn(T(M/e)-x^{k-1}y^{m})
         &=\sgn(x+y-xy).
        \end{align*}
    Now, by the deletion-contraction formula for $T(M)$, 
     \begin{align*}
            \sgn(T(M)-x^ky^m)&=\text{sgn}(T(M/e)+x^ky^{m-1}-x^ky^m)\\
            &=\sgn(T(M/e)-x^{k-1}y^{m-1}(xy-x)).
        \end{align*}
    If $\sgn(x+y-xy)=1$, then $y>xy-x$ and $T(M/e)>x^{k-1}y^{m-1}(y)$. Hence $\text{sgn}(T(M/e)-x^{k-1}y^{m-1}(xy-x))=1=\text{sgn}(T(M)-x^ky^m)$. Applying analogous arguments to the remaining two cases, it follows that $\text{sgn}(T(M)-x^ky^m)=\text{sgn}(x+y-xy)$, as desired.

    We may now assume that $M\del e\not\cong U_{k,k}\oplus U_{0,m-1}$. By duality, we may also assume that $M/e\not\cong U_{k-1,k-1}\oplus U_{0,m}$. By the induction assumption, 
    \[
        \text{sgn}(T(M\del e)-x^ky^{m-1})=\text{sgn}(T(M/e)-x^{k-1}y^m)=\text{sgn}(x+y-xy),
    \]
    so $\text{sgn}(T(M\del e)-x^ky^{m-1}+T(M/e)-x^{k-1}y^m)=\text{sgn}(x+y-xy).$
    It follows that  
        \begin{align*}
            \text{sgn}(x+y-xy)
            &=\text{sgn}(T(M\del e)+T(M/e)-x^ky^{m-1}-x^{k-1}y^m)\\
            &=\text{sgn}(T(M)-x^{k-1}y^{m-1}(x+y))\\
            &=\text{sgn}(T(M)-x^{k-1}y^{m-1}(xy))\\
            &=\text{sgn}(T(M)-x^ky^m),
        \end{align*} 
    where the third equality follows by checking each of the three possibilities for $\sgn(x+y-xy).$ We conclude that the lemma holds. 
\end{proof}

\section{Proof of the Main Theorem}
The following argument follows the same general structure as the proof of Lemma \ref{lc}. We again use the abbreviation $T(M)$ in place of $T(M;x,y)$.
\begin{proof}[Proof of Theorem \ref{main}.]
    It suffices to prove the result when $M$ and $N$ have a common ground set $E$ and the rank-preserving weak map from $M$ to $N$ is the identity map. 
    We will argue by induction on $|E|$ that $\text{sgn}(T(M)-T(N))=\text{sgn}(x+y-xy)$ whenever $M\neq N$. 
    
    Let $|E|=2$. 
    Since $U_{1,2}$ and $U_{1,1}\oplus U_{0,1}$ are the only 2-element matroids of equal rank, we must have that $M\cong U_{1,2}$ and $N\cong U_{1,1}\oplus U_{0,1}$. 
    As \mbox{$T(U_{1,2};x,y)=x+y$} and $T(U_{1,1}\oplus U_{0,1};x,y)=xy$, we see that the result holds for $|E|=2$.

    Assume the result holds for $|E|<n$ and let $|E|=n\geq 3$. Take $e\in E$.
    If $e$ is a coloop of $M$, then $e$ is a coloop of $N$, so $T(M)=xT(M\del e)$ and $T(N)=xT(N\del e)$. 
    Therefore, as $x>0$, 
    \[\text{sgn}(T(M)-T(N))=\text{sgn}(T(M\del e)-T(N\del e))=\text{sgn}(x+y-xy).\] 
    Applying a similar argument to the dual, we see that the assertion holds if $M$ has a loop. 

    Suppose $e$ is not a loop or a coloop of $N$.
    Then  
        \begin{align}\label{eq1}
            \text{sgn}(T(M)-T(N))&=\text{sgn}(T(M\del e)+T(M/e)-T(N\del e)-T(N/e))\nonumber\\
                                 &=\text{sgn}(T(M\del e)-T(N\del e)\nonumber\\
                                 &\hspace{2.8cm}+T(M/ e)-T(N/e)). 
        \end{align}
    Since $M\neq N$, we have that $M\del e\neq N\del e$ or $M/e\neq N/e$. 

    Suppose that $M\del e\neq N\del e$ and $M/e\neq N/e$. Then, by the induction assumption,  
    \[
        \text{sgn}(T(M\del e)-T(N\del e))=\text{sgn}(x+y-xy)=\text{sgn}(T(M/e)-T(N/e)).
    \]
    Thus $\text{sgn}(T(M\del e)-T(N\del e)+T(M/e)-T(N/e))=\text{sgn}(x+y-xy)$, that is, $\text{sgn}(T(M)-T(N))=\text{sgn}(x+y-xy)$.  

Now suppose that $M\del e=N\del e$ or $M/e=N/e$. Then, by (\ref{eq1}),  
    \begin{align*}
        \text{sgn}(T(M)-T(N))&=\begin{cases}
                                     \text{sgn}(T(M\del e)-T(N\del e)) &\text{ if } M/e=N/e,\\
                                     \text{sgn}(T(M/e)-T(N/e)) &\text{ if } M\del e=N\del e,\\
                                 \end{cases}\\
                             &= \text{sgn}(x+y-xy)
    \end{align*}
    where the last step follows by the induction assumption. 

    Finally, if every element of $N$ is a loop or a coloop, then $N\cong U_{k,k}\oplus U_{0,m}$, so $T(N;x,y)=x^{k}y^{m}$.
    Since $M\xrightarrow{rp}N$, the matroid $M$ has rank $k$ and nullity $m$. 
    The result follows immediately from Lemma \ref{lc}.  
\end{proof}

\section{Consequences}
A flat $F$ of a matroid $M$ is \textit{cyclic} if $F$ is a union of circuits. Given distinct elements $f$ and $g$ of a matroid $M$, it is well known that $f$ is freer than $g$ if $g$ is contained in every cyclic flat containing $f$. 
It is worth noting that, if $f$ is a coloop of $M$, then $f$ is vacuously freer than $g$ for all $g\in E(M)-\{f\}$. 
Likewise, if $g$ is a loop of $M$, then $f$ is freer than $g$ for all $f\in E(M)-\{g\}$. 
Consequently, our discussion of relative freedom is primarily concerned with elements of $M$ that are neither loops nor coloops. 

Duke showed that relative freedom extends nicely to both duals and minors. If $f$ is freer than $g$ in $M$, then $g$ is freer than $f$ in $M^*$. Moreover, $f$ is freer than $g$ in $M\del X/Y$ for all disjoint subsets $X$ and $Y$ of $E(M)$.  

This section explores the notion of relative freedom of elements of a matroid and its connection to weak maps and the Tutte polynomial. The following result provides our first direct link between relative freedom and rank-preserving weak maps.

Define the map $\varphi_{gf}: E(M/f) \to E(M/g)$ by taking $\varphi_{gf}(g)=f$ and $\varphi_{gf}(e)=e$ for all $e\neq g$. 

    \begin{lemma}\label{freermap}
       If $f$ is freer than $g$ in a matroid $M$ and $g$ is not a loop of $M$, then $\varphi_{gf}$ is a rank-preserving weak map from $M/f$ to $M/g$.  
   \end{lemma}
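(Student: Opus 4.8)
The plan is to verify the two conditions in the definition of a rank-preserving weak map directly. The bijection-and-rank part is quick. I would first note that $f$ is not a loop of $M$: otherwise the unique circuit containing $f$ is $\{f\}$, whose closure is the set of loops of $M$, and then $f$ being freer than $g$ would force $g$ to be a loop, contrary to hypothesis. Since neither $f$ nor $g$ is a loop, $r(M/f)=r(M)-1=r(M/g)$, and $\varphi_{gf}$ is clearly a bijection from $E(M/f)=E(M)-f$ to $E(M/g)=E(M)-g$, since it merely interchanges the roles of $f$ and $g$.

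For the weak-map condition, recall that for an element $e$ of $M$ that is not a loop, a subset $A$ of $E(M)-e$ lies in $\I(M/e)$ if and only if $A\cup\{e\}\in\I(M)$. So let $I\in\I(M/g)$; then $g\notin I$ and $I\cup\{g\}\in\I(M)$, and it suffices to show $\varphi_{gf}^{-1}(I)\cup\{f\}\in\I(M)$. If $f\in I$, then $\varphi_{gf}^{-1}(I)=(I-f)\cup\{g\}$, so $\varphi_{gf}^{-1}(I)\cup\{f\}=I\cup\{g\}$, which is independent by hypothesis; the freeness of $f$ over $g$ plays no role here. If $f\notin I$, then $\varphi_{gf}^{-1}(I)=I$, and we must show that $I\cup\{f\}\in\I(M)$. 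This is the only place the freeness hypothesis enters: if $I\cup\{f\}$ were dependent, then, since $I$ is independent, it would contain a circuit $C$ with $f\in C\subseteq I\cup\{f\}$; hence $f\in\cl_M(I)$ and $\cl_M(C)\subseteq\cl_M(I\cup\{f\})=\cl_M(I)$. As $f$ is freer than $g$, this gives $g\in\cl_M(C)\subseteq\cl_M(I)$, contradicting $g\notin\cl_M(I)$, which holds because $I\cup\{g\}$ is independent.

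I do not expect a serious obstacle. The one point needing a little care is the case analysis on whether $f\in I$, together with the remark that the freeness hypothesis is used only when $f\notin I$ — there one must first check that the circuit $C$ through $f$ contained in $I\cup\{f\}$ actually lies in $\cl_M(I)$ before the defining property of ``freer than'' can be applied. The rest is routine bookkeeping with closures and with the correspondence between independence in $M$ and in a contraction of $M$.
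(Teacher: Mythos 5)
Your proof is correct and follows essentially the same route as the paper's: reduce to showing that $\varphi_{gf}^{-1}(I)\cup\{f\}$ is independent in $M$ for $I$ independent in $M/g$, split on whether $f\in I$, handle the case $f\in I$ by noting $\varphi_{gf}^{-1}(I)\cup\{f\}=I\cup\{g\}$, and in the case $f\notin I$ invoke the ``freer than'' hypothesis on a circuit $C\subseteq I\cup\{f\}$ through $f$ to land $g$ in $\cl_M(I)$, contradicting independence of $I\cup\{g\}$. Your argument is slightly more explicit than the paper's about two points the paper leaves tacit — that $f$ is not a loop (so $r(M/f)=r(M/g)$) and the chain $g\in\cl_M(C)\subseteq\cl_M(I\cup\{f\})=\cl_M(I)$ — but the underlying idea is identical.
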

        \begin{proof}
            Let $I$ be independent in $M/g$. 
            Then $I\cup g$ is independent in $M$.
            Suppose $f\not\in I$. 
            Then $\varphi_{gf}^{-1}(I)=I$. 
            If $I$ is dependent in $M/f$, then $M$ has a circuit $C$ such that $C\subseteq I\cup f$.
            Moreover, $f\in C$ since $I$ is independent in $M$. 
            As $f$ is freer than $g$ in $M$, we see that $g\in \cl_M(C)$.
            Then $I\cup g$ contains a circuit of $M$, a contradiction.
            Therefore $I$ is independent in $M/f$. 
            
            Suppose $f\in I$. 
            Then $f\in I\cup g$ and $I\cup g$ is independent in $M$. 
            Therefore $\varphi_{gf}^{-1}(I)$, which equals $(I\cup g)-f$, is independent in $M/f$. 
        \end{proof}

The next result follows immediately from Theorem \ref{main} and Lemma \ref{freermap}. The straightforward proof is omitted. 

    \begin{corollary}\label{combos}
        Let $x>0$ and $y>0$. If $f$ is freer than $g$ in $M$ and $g$ is not a loop of $M$, then $T(M/f;x,y)\geq T(M/g;x,y)$ if and only if $x+y\geq xy$ or $M/f\cong M/g$. 
    \end{corollary}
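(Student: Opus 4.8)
The plan is to derive the corollary directly from the two ingredients cited before it, namely Lemma~\ref{freermap} and Theorem~\ref{main}. First I would observe that the hypotheses of the corollary are exactly the hypotheses of Lemma~\ref{freermap}: $f$ is freer than $g$ in $M$, and $g$ is not a loop of $M$. Hence Lemma~\ref{freermap} applies and tells us that $\varphi_{gf}$ is a rank-preserving weak map from $M/f$ to $M/g$. In particular, $M/f$ and $M/g$ are matroids of the same rank admitting a rank-preserving weak map between them, so the setup of Theorem~\ref{main} is in force with $M/f$ playing the role of $M$ and $M/g$ playing the role of $N$.

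Next I would simply invoke Theorem~\ref{main}. For positive reals $x$ and $y$, it gives $T(M/f;x,y)\geq T(M/g;x,y)$ if and only if $x+y\geq xy$ or $M/f\cong M/g$, which is precisely the assertion of Corollary~\ref{combos}. (If one wished, the ``moreover'' clause of Theorem~\ref{main} transfers verbatim as well, yielding $T(M/f;x,y)>T(M/g;x,y)$ exactly when $M/f\not\cong M/g$ and $x+y>xy$, but this sharper statement is not required here.)

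There is no real obstacle: the entire content has been isolated in Lemma~\ref{freermap} (which supplies the weak map) and Theorem~\ref{main} (which supplies the Tutte-polynomial inequality and its sharpness), so the proof is a one-line composition of the two. This is exactly why the authors can legitimately call it straightforward and omit it. The only point worth a moment's care is that the hypothesis ``$g$ is not a loop of $M$'' is genuinely needed, since it is what makes $\varphi_{gf}$ a \emph{bijection} between $E(M/f)$ and $E(M/g)$ in Lemma~\ref{freermap}; with that in hand, everything else is automatic.
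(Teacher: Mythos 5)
Your proof is correct and is exactly the one-line composition of Lemma~\ref{freermap} and Theorem~\ref{main} that the paper has in mind (the paper explicitly omits it as ``straightforward''). One small quibble with your closing remark: the hypothesis that $g$ is not a loop is not needed to make $\varphi_{gf}$ a bijection (it is a bijection from $E(M)-\{f\}$ to $E(M)-\{g\}$ regardless); rather, if $g$ were a loop then $r(M/g)=r(M)>r(M)-1=r(M/f)$, so $\varphi_{gf}$ could not be \emph{rank-preserving}, and Lemma~\ref{freermap} and hence Theorem~\ref{main} would not apply.
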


    \begin{corollary}
        Let $x>0$ and $y>0$. If $f$ is freer than $g$ in $M$ and $g$ is not a coloop of $M$, then $T(M\del f;x,y)\leq T(M\del g;x,y)$ if and only if $x+y\geq xy$ or $M\del f\cong M\del g$. 
    \end{corollary}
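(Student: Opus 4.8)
The plan is to derive this corollary as the dual of Corollary~\ref{combos}, so that no genuinely new argument is required---only a careful translation. The ingredients I would use are: Duke's fact that $f$ is freer than $g$ in $M$ if and only if $g$ is freer than $f$ in $M^*$; the fact that an element is a coloop of $M$ exactly when it is a loop of $M^*$; the standard identity $(M\del e)^*=M^*/e$; and the lemma opening Section~2, that rank-preserving weak maps pass to duals.

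First I would move to $M^*$. Since $f$ is freer than $g$ in $M$, the element $g$ is freer than $f$ in $M^*$, and $f$ is not a loop of $M^*$. Applying Lemma~\ref{freermap} to $M^*$, with $g$ in the role of the freer element and $f$ in the role of the less-free one, yields a rank-preserving weak map $M^*/g\to M^*/f$. Rewriting the contractions of $M^*$ as deletions of $M$, this is a rank-preserving weak map $(M\del g)^*\to(M\del f)^*$, and dualizing once more produces a rank-preserving weak map $M\del g\to M\del f$. Theorem~\ref{main}, applied to this weak map, then gives $T(M\del g;x,y)\ge T(M\del f;x,y)$ if and only if $x+y\ge xy$ or $M\del g\cong M\del f$, which is exactly the asserted equivalence after rearranging the inequality and using symmetry of $\cong$. (Alternatively, one can quote Corollary~\ref{combos} for $M^*$ verbatim and then substitute $T(M^*/e;x,y)=T((M\del e)^*;x,y)=T(M\del e;y,x)$, interchanging $x$ and $y$ at the end; this is legitimate because both the region $x+y\ge xy$ and the truth value of $M\del f\cong M\del g$ are unchanged under $x\leftrightarrow y$.)

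The one place this argument can go wrong---and the only step I would verify with care---is the bookkeeping of which element is forbidden to be a loop or a coloop. Because Duke's correspondence swaps the roles of $f$ and $g$, the condition that makes the dualization legitimate (so that the relevant contraction of $M^*$ is not a contraction of a loop, and the weak map is rank-preserving) is that $f$, not merely $g$, is not a coloop of $M$; this is subsumed by the paper's standing restriction to elements that are neither loops nor coloops, and in any case $f$ being freer than $g$ together with $f$ not a coloop already forces $g$ not to be a coloop, since then $g$ lies in the closure of some circuit of $M$. Apart from tracking this hypothesis, the deduction is a routine transcription of Corollary~\ref{combos}, so I anticipate no further difficulty.
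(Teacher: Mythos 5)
Your approach is the paper's proof: dualize, apply Corollary~\ref{combos} to $M^*$ (where $g$ is freer than $f$), and translate back via $T(M^*/e;y,x)=T(M\del e;x,y)$ and the $x\leftrightarrow y$ symmetry of $x+y\geq xy$. You also correctly spotted the one delicate point, but then reasoned yourself past it in the wrong direction. When Corollary~\ref{combos} is applied in $M^*$, the element that must not be a loop of $M^*$ is the \emph{less free} one, namely $f$; so the argument needs $f$ not to be a coloop of $M$. You observe that ``$f$ freer than $g$ and $f$ not a coloop'' forces ``$g$ not a coloop'' --- true, but that is the converse of what is needed, since the stated hypothesis is only ``$g$ not a coloop,'' and this does not force $f$ not to be a coloop.

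The statement as written in fact fails: take $M=U_{1,1}\oplus U_{1,2}$ with $f$ the coloop and $g$ one element of the parallel pair. Then $f$ is vacuously freer than $g$, $g$ is not a coloop, $M\del f\cong U_{1,2}$, $M\del g\cong U_{2,2}$, and at $(x,y)=(1,1)$ we have $T(M\del f;1,1)=2>1=T(M\del g;1,1)$ even though $x+y\geq xy$ holds and $M\del f\not\cong M\del g$. The hypothesis should read ``$f$ is not a coloop of $M$''; with that correction your deduction is complete and is exactly the paper's. The paper's own printed proof carries the same slip in the hypothesis, so your hesitation was warranted --- you should have pressed it rather than dismissed it.
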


    \begin{proof}
        Since $g$ is freer than $f$ in $M^*$, it follows by Corollary \ref{combos} that $T(M^*/f;y,x)\leq T(M^*/g;y,x)$ if and only if $x+y \geq xy$ or $M^*/f\cong M^*/g$. Thus, by duality, we have $T(M\del f;x,y)\leq T(M\del g;x,y)$ if and only if $x+y\geq xy$ or $M\del f\cong M\del g$. 
    \end{proof}

    The following result lists several consequences of Corollary \ref{combos}. We use $b(M)$, $W_k(M)$, $h(M)$, and $\gamma(M)$ to represent the numbers of bases, rank-$k$ flats, hyperplanes, and circuits of $M$, respectively. To specify the numbers of such sets containing some element $e$ of $M$, we write, for example, $b(e;M)$ and $W_k(e;M)$. 

    \begin{corollary}\label{cnt}
        If $f$ is freer than $g$ in $M$, then 
        \begin{enumerate}[label=(\roman*)]
            \item $b(f;M)\geq b(g;M)$; 
            \item $W_{k}(f;M)\geq W_{k}(g;M)$ for all $k\geq 0$, provided $g$ is not a loop of $M$; 
            \item $h(f;M)\geq h(g;M)$, provided $g$ is not a loop of $M$;
            \item $\gamma(f;M)\geq \gamma(g;M)$, provided $f$ is not a coloop of $M$.

        \end{enumerate}
    \end{corollary}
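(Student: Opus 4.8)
The plan is to derive each of the four counting inequalities from Corollary \ref{combos} by evaluating the Tutte polynomials of $M/f$ and $M/g$ (or $M\del f$ and $M\del g$) at the single point $(x,y)=(2,2)$, which lies strictly inside the region $x+y>xy$ fails... actually $(2,2)$ satisfies $x+y=4=xy$, so it is on the hyperbola — I would instead pick points just off it, or better, pick the classical evaluation points $(1,1)$, $(2,1)$, $(1,2)$ and recall how they specialize under deletion and contraction. Concretely, $T(M;1,1)=b(M)$, $T(M;2,1)$ counts independent sets, and $T(M;1,2)$ counts spanning sets, and each of these evaluation points satisfies $x+y>xy$ (since $1+1>1$, $2+1>2$, $1+2>2$). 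So Corollary \ref{combos} applied at such a point gives a strict-or-equality inequality $T(M/f;x,y)\ge T(M/g;x,y)$ unconditionally, and similarly the dual corollary gives $T(M\del f;x,y)\le T(M\del g;x,y)$.

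For part (i), I would use the identity $b(e;M)=b(M/e)$ when $e$ is not a loop (the bases of $M$ through $e$ correspond bijectively to the bases of $M/e$), together with $b(M/e)=T(M/e;1,1)$. If $g$ is a loop then $b(g;M)=0\le b(f;M)$ trivially; if $g$ is not a loop, apply Corollary \ref{combos} at $(1,1)$. For parts (ii) and (iii), the key is that rank-$k$ flats of $M$ containing $g$ correspond to rank-$(k-1)$ flats of $M/g$, and hyperplanes of $M$ containing $g$ correspond to hyperplanes of $M/g$; so it suffices to show $W_{k-1}(M/f)\ge W_{k-1}(M/g)$ and $h(M/f)\ge h(M/g)$. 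Here I would need to know that the number of flats of a given corank is a positive evaluation of the Tutte polynomial — recall that $W_k(M)$ and $h(M)$ can be written via the characteristic or Whitney-rank generating polynomial, and in particular $h(M)$ and the number of flats relate to $T$ evaluated along the line or via the formula counting "no-broken-circuit"-type structures; if a clean single-point evaluation is not available, I would instead invoke that $M/f \xrightarrow{rp} M/g$ (Lemma \ref{freermap}) and apply a Lucas-type monotonicity for flat-counts directly, or observe that the number of corank-$j$ flats is a nonnegative integer combination of Tutte coefficients and the weak map induces coefficientwise domination. For part (iv), circuits through $f$ correspond to circuits of $M\del f$... no — circuits through $f$ do not live in $M\del f$; instead, using that $f$ is freer than $g$ gives $g$ freer than $f$ in $M^*$, and cocircuits relate to the dual, so $\gamma(f;M)$ equals the number of cocircuits of $M^*$ through $f$, which ties to bases or hyperplanes of a minor; concretely circuits of $M$ through $f$ are complements of hyperplanes of $M^*$ avoiding $f$, i.e. hyperplanes of $M^*\del f$, and one applies the dual of (iii).

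The main obstacle I anticipate is part (ii)/(iii): translating "number of flats of a fixed corank" into a genuine positive evaluation $T(M;x,y)$ with $(x,y)$ in the open region $x+y>xy$, since flat-counts are encoded by the Tutte polynomial only through the Möbius/characteristic polynomial, which is $T(M;1-t,0)$ — and the line $y=0$ is on the boundary of the allowed quadrant, not in the strict region. I expect the resolution is that Lemma \ref{freermap} already hands us a rank-preserving weak map $M/f\xrightarrow{rp}M/g$ (or $M/g\xrightarrow{rp}M/f$... one must check the direction), and the needed monotonicity of flat-counts under rank-preserving weak maps is either part of Lucas's original results or follows because every flat of the image pulls back to a flat of the source of no larger rank; I would either cite or reprove this auxiliary fact and then reduce (ii), (iii), (iv) to it, reserving the Tutte polynomial itself for part (i) via the $(1,1)$ evaluation.
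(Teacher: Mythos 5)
Your plan matches the paper's proof essentially step for step: part (i) via $b(e;M)=b(M/e)$ and the evaluation $T(\,\cdot\,;1,1)$, parts (ii)/(iii) via the correspondence between rank-$k$ flats through $e$ and rank-$(k-1)$ flats of $M/e$ together with a monotonicity of flat-counts under the rank-preserving weak map $M/f\to M/g$ of Lemma~\ref{freermap}, and part (iv) by passing to $M^*$ and complementation ($\gamma(f;M)=h(M^*)-h(f;M^*)$) and invoking (iii). You correctly identified that the flat-count monotonicity is \emph{not} a positive Tutte evaluation in the open region and must be cited or proved separately; the paper supplies this exactly where you anticipated, by citing Kung and Nguyen \cite[Proposition~9.3.3]{kung}.
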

    \noindent \textit{Proof.}
    For (i), note that $b(e;M)=b(M/e)$ for $e\in E(M)$ as long as $e$ is not a loop. If $g$ is a loop of $M$, then $b(g;M)=0$, so (i) holds. Assume $g$ is not a loop of $M$. As $f$ is freer than $g$ and $g$ is not a loop, $f$ is not a loop. Thus the map $\varphi_{gf}$ is a rank-preserving weak map from $M/g$ to $M/f$. By Theorem \ref{main}, we have $b(M/f)=T(M/f;1,1)\geq T(M/g;1,1)=b(M/g)$. Thus (i) holds.

     To prove (ii), observe that, when $e$ is not a loop of $M$, a set $X$ is a rank-$k$ flat of $M$ if and only if $X-e$ is a rank-$(k-1)$ flat of $M/e$. 
     Suppose $g$ is not a loop of $M$. 
     Then, since $\varphi_{gf}$ is a rank-preserving weak map from $M/f$ to $M/g$, it follows by \cite[Proposition 9.3.3]{kung}, that $W_{k-1}(M/f)\geq W_{k-1}(M/g)$ for all $k\geq 1$. Thus $W_{k}(f;M)\geq W_{k}(g;M)$ for all $k\geq 1$. Also  $W_{0}(f;M)=0=W_{0}(g;M)$ since neither $f$ nor $g$ is a loop of $M$. Thus (ii) holds. Hence so does (iii).  

    For (iv), observe that $\gamma(e;M)=h(M^*)-h(e;M^*)$. Assume $f$ is not a coloop of $M$. Since $g$ is freer than $f$ in $M^*$, we have, by (iii), that $h(f;M^*)\leq h(g;M^*)$. Therefore 

    $$h(M^*)-h(f;M^*)\geq h(M^*)-h(g;M^*)$$ and (iv) holds.\hfill $\Box$ 

 \qquad

To illustrate (ii) of the last corollary, consider the matroid $M$ in Figure \ref{qsix}, where $x_1$ is freer than $x_3$. 
Evidently, $W_2(x_1;M)=4$ and $W_2(x_3;M)=3$, so $W_2(x_1;M)>W_2(x_3;M)$. 
By contrast, since every cyclic flat containing $x_1$ also contains $x_3$, the number of rank-2 cyclic flats containing $x_3$ is at least the number of rank-2 cyclic flats containing $x_1$. Indeed, the former is 2 and the latter 1. 

Let $\gamma'(e;M)$ be the number of spanning circuits of $M$ containing an element $e$ of $M$.  

    \begin{corollary}
        If $f$ is freer than $g$ in $M$ and $f$ is not a coloop of $M$, then $\gamma'(f;M)\geq \gamma'(g;M)$.     
    \end{corollary}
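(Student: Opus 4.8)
The quantity $\gamma'(f;M)$ is not the evaluation of a Tutte polynomial at a single point, so Theorem \ref{main} cannot be applied to it directly; the plan is instead to reduce the inequality to a comparison of the \emph{total} numbers of spanning circuits of the deletions $M\del f$ and $M\del g$. First I would handle the degenerate cases. If $r(M)=0$, then every element of $M$ is a loop and is its own spanning circuit, so $\gamma'(f;M)=1=\gamma'(g;M)$. If $r(M)\geq 1$ and $g$ is a loop or a coloop, then $g$ lies in no circuit of size at least two and hence in no spanning circuit, so $\gamma'(g;M)=0\leq\gamma'(f;M)$. So I may assume $r(M)\geq 1$ and that $g$ is neither a loop nor a coloop of $M$. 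Since neither $f$ nor $g$ is a coloop, $r(M\del f)=r(M\del g)=r(M)$, and so a set is a spanning circuit of $M$ avoiding $f$ precisely when it is a spanning circuit of $M\del f$ (and similarly for $g$). Thus, writing $\gamma'(N)$ for the number of spanning circuits of a matroid $N$, we have $\gamma'(f;M)=\gamma'(M)-\gamma'(M\del f)$ and $\gamma'(g;M)=\gamma'(M)-\gamma'(M\del g)$, so it suffices to prove that $\gamma'(M\del f)\leq\gamma'(M\del g)$.

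To obtain this, I would produce a rank-preserving weak map from $M\del g$ to $M\del f$. Since relative freedom passes to the dual, $g$ is freer than $f$ in $M^{*}$; and since $f$ is not a coloop of $M$, it is not a loop of $M^{*}$. Hence Lemma \ref{freermap}, applied to $M^{*}$, shows that $\varphi_{fg}$ is a rank-preserving weak map from $M^{*}/g$ to $M^{*}/f$. Dualizing, and using that $(N/e)^{*}=N^{*}\del e$, we conclude that $\varphi_{fg}$ is a rank-preserving weak map from $M\del g$ to $M\del f$.

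It then remains to observe that the number of spanning circuits cannot increase under a rank-preserving weak map. Indeed, a set $C$ is a spanning circuit of a matroid $N$ if and only if $|C|=r(N)+1$ and every $r(N)$-element subset of $C$ is a basis of $N$; so if $\psi:N_{1}\to N_{2}$ is a rank-preserving weak map, then, since $\psi^{-1}$ is a bijection between the ground sets that carries bases of $N_{2}$ to bases of $N_{1}$, it maps the spanning circuits of $N_{2}$ injectively into the spanning circuits of $N_{1}$, giving $\gamma'(N_{2})\leq\gamma'(N_{1})$. Applying this with $\psi=\varphi_{fg}:M\del g\to M\del f$ yields $\gamma'(M\del f)\leq\gamma'(M\del g)$, and therefore $\gamma'(f;M)=\gamma'(M)-\gamma'(M\del f)\geq\gamma'(M)-\gamma'(M\del g)=\gamma'(g;M)$, as desired.

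I do not expect a serious obstacle here: the only ingredient not already present in the paper is the last observation, and it is essentially immediate once one adopts the reformulation of a spanning circuit as an $(r+1)$-element set all of whose $r$-element subsets are bases. The genuinely useful step is simply recognizing that $\gamma'$ should be analyzed through deletions rather than through the Tutte polynomial, and the only points requiring a line of care are the identification of the spanning circuits of $M$ avoiding $f$ with those of $M\del f$ (which is where $f$ not a coloop is used) and that reformulation of a spanning circuit. Alternatively, one can avoid the weak-map lemma by exhibiting a direct injection from the spanning circuits of $M$ containing $g$ into those containing $f$: send such a circuit $C$ to itself if $f\in C$, and to $(C\del g)\cup f$ otherwise; checking that the latter is again a spanning circuit is exactly where the hypothesis that $g$ lies in the closure of every circuit through $f$ is used.
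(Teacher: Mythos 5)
Your main argument is correct, but it takes a genuinely different route from the paper's. The paper argues directly: given a spanning circuit $D$ of $M$ containing $g$ but not $f$, it writes $D=B\cup g$ with $B$ a basis and shows $B\cup f$ must itself be a circuit (hence a spanning circuit), giving an explicit injection $D\mapsto (D-g)\cup f$ from the spanning circuits through $g$ and not $f$ to those through $f$ and not $g$. That is exactly the ``alternative'' you sketch in your last sentence. Your primary route instead converts the local count $\gamma'(e;M)$ to the global count $\gamma'(M)-\gamma'(M\del e)$ (valid for $e$ not a coloop), constructs via Lemma~\ref{freermap} and duality a rank-preserving weak map from $M\del g$ to $M\del f$, and then proves a small general fact not stated in the paper: the number of spanning circuits is non-increasing under a rank-preserving weak map, via the characterization of a spanning circuit as an $(r+1)$-set all of whose $r$-subsets are bases. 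All the steps check out, including the degenerate cases and the observation that $f$ cannot be a loop once $g$ is assumed not to be one. The paper's argument is shorter and uses only the definition of relative freedom; yours is a bit longer and leans on Duke's duality of freedom and Lemma~\ref{freermap}, but in exchange it yields a reusable monotonicity statement for $\gamma'$ under rank-preserving weak maps, in the same spirit as the basis, independent-set, and flat counts already treated in the section.
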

    
        \begin{proof}
           Take a spanning circuit $D$ of $M$ containing $g$ but not $f$. Then $D=B\cup g$ for some basis $B$ of $M$. Suppose $B \cup f$ is not a circuit of $M$. Then $B\cup f$ properly contains a circuit $C$ of $M$ and $f\in C$. Hence $\cl(C)$ contains $g$. The set $C-f$ spans $C$, so $g \in \cl(C-f)$. Thus $(C-f)\cup g$ is a dependent set that is a proper subset of the circuit $D$, a contradiction.  
        \end{proof}

        {\begin{lemma}\label{restriction}
        The following are equivalent for elements $f$ and $g$ in a matroid $M$. 
        \begin{enumerate}[label=(\roman*)]
            \item $f$ is freer than $g$ in $M$;  
            \item $b(f;N)\geq b(g;N)$ for all restrictions $N$ of $M$ containing $\{f,g\}$. 
        \end{enumerate}
    \end{lemma}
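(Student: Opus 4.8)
The plan is to establish the two implications separately, proving (i)$\,\Rightarrow\,$(ii) directly and (ii)$\,\Rightarrow\,$(i) by contraposition. For (i)$\,\Rightarrow\,$(ii), let $N$ be any restriction of $M$ with $\{f,g\}\subseteq E(N)$. By Duke's result that relative freedom is inherited by minors (or, directly, since for each circuit $C$ of $N$ through $f$ we have $C$ a circuit of $M$ and $\cl_N(C)=\cl_M(C)\cap E(N)\ni g$), the element $f$ is freer than $g$ in $N$. Applying Corollary~\ref{cnt}(i) with $N$ in place of $M$ then gives $b(f;N)\geq b(g;N)$, which is exactly the content of (ii).

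For (ii)$\,\Rightarrow\,$(i), I would assume $f$ is not freer than $g$ in $M$ and produce a single restriction $N$ containing $\{f,g\}$ with $b(f;N)<b(g;N)$. By the definition of ``freer,'' there is a circuit $C$ of $M$ with $f\in C$ and $g\notin\cl_M(C)$; in particular $g\notin C$, so $\{f,g\}\subseteq C\cup g$. Take $N=M|(C\cup g)$. Since $g\notin\cl_M(C)$, we have $r(N)=r_M(C\cup g)=r_M(C)+1=|C|$, so $N$ has nullity $1$; and since $E(N)-g=C$ has rank $|C|-1$, the element $g$ is a coloop of $N$.

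It then remains to count bases. Each basis of $N$ has size $|C|$, hence is $E(N)$ minus a single element: deleting $g$ leaves the dependent set $C$, while deleting $c\in C$ leaves $(C-c)\cup g$, which is independent because $\cl_M(C-c)=\cl_M(C)$ does not contain $g$. So the bases of $N$ are exactly the sets $(C-c)\cup g$ with $c\in C$; there are $|C|$ of them, all contain $g$, and precisely $|C|-1$ contain $f$. Thus $b(f;N)=|C|-1<|C|=b(g;N)$, contradicting (ii). I do not expect a genuine obstacle here: the only step needing a little care is checking that the witnessing restriction $M|(C\cup g)$ has nullity one and that $g$ is a coloop of it, after which the basis count — and hence the whole equivalence — is immediate.
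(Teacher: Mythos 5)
Your argument is correct, and the first direction (i)$\Rightarrow$(ii) is identical to the paper's: inherit relative freedom under restriction, then invoke Corollary~\ref{cnt}(i). For the reverse direction the structure is the same (contrapositive, produce a small restriction where $g$ is a coloop while $f$ is not), but you choose a different witness and conclude differently. The paper works from the cyclic-flat characterization of freedom: if $f$ is not freer than $g$, there is a cyclic flat $F$ with $f\in F$, $g\notin F$; it sets $N'=M|(F\cup g)$, observes that $g$ is a coloop of $N'$ so $b(g;N')=b(N')$, and then $b(f;N')\geq b(g;N')$ forces $b(f;N')=b(N')$, i.e., $f$ is a coloop of $N'$ -- contradicting the fact that $f$ lies in a circuit inside the cyclic flat $F$. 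You instead work directly from the circuit definition, taking a circuit $C\ni f$ with $g\notin\cl_M(C)$ and the nullity-one restriction $M|(C\cup g)$, then count bases explicitly to get $b(f;N)=|C|-1<|C|=b(g;N)$. Your witness is a smaller, more explicit restriction and your conclusion is by direct enumeration rather than the coloop-vs-coloop contradiction; the paper's choice of a cyclic flat makes the coloop argument for $f$ immediate (a cyclic flat is a union of circuits). Both are clean; yours is arguably the more elementary and self-contained route, while the paper's leans on the well-known cyclic-flat reformulation it has already recorded.
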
}

    \begin{proof}
        Suppose (i) holds. Then $f$ is freer than $g$ in all restrictions $N$ of $M$ containing $\{f,g\}$, so (ii) holds by Corollary \ref{cnt}(i). 
        
        Suppose (ii) holds and suppose $f$ is not freer than $g$. Then $M$ has a cyclic flat $F$ containing $f$ and avoiding $g$. Let $N'=M|(F\cup g)$. Note that $g$ is a coloop of $N'$. 
        Then $b(g;N')=b(N')$. 
        As $b(f;N')\geq b(g;N')$, it follows that $b(f;N')=b(N')$. Thus $f$ is a coloop of $N'$, a contradiction. 
    \end{proof}

    In the remaining results of this paper, we investigate several instances of equality holding between the number of distinguished sets of $M$ containing $f$ and the number of such sets containing $g$. 
    Let $x$ and $y$ be elements of $M$. 
    Then $x$ and $y$ are \textit{clones} in $M$ if and only if the bijection from $E(M)$ to $E(M)$ that interchanges $x$ and $y$ and fixes every other element is an isomorphism.
    It was shown in \cite[Proposition 4.9]{geelen} that $x$ and $y$ are clones if and only if the set of cyclic flats containing $x$ is equal to the set of cyclic flats containing $y$. 
    Thus $x$ and $y$ are clones if and only if $x$ is freer than $y$, and $y$ is freer than $x$ in $M$. As an example, in Figure \ref{qsix}, the elements $x_3$ and $x_3'$ are clones, as are $x_1$ and $x_2$, and $x_4$ and $x_5$, but there are no other pairs of clones in $M$. Two elements that are parallel are clones as are two elements that are in series.

    \begin{theorem}\label{beq}
        Let $f$ be freer than $g$ in $M$. Then $b(f;M)=b(g;M)$ if and only if $f$ and $g$ are clones in $M$.     
    \end{theorem}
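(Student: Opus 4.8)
The plan is to prove the two implications separately; the forward one is routine and the converse carries the content.

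If $f$ and $g$ are clones, the transposition $\sigma$ of $E(M)$ swapping $f$ and $g$ is an automorphism of $M$, and it restricts to a bijection between the bases of $M$ containing $f$ and those containing $g$, so $b(f;M)=b(g;M)$. For the converse, assume $b(f;M)=b(g;M)$. I would first dispose of the loop case: if $g$ is a loop then $b(g;M)=0$, hence $b(f;M)=0$ and $f$ is a loop as well, and any two loops are clones. So assume $g$, and therefore $f$, is not a loop (if $f$ were a loop, its only circuit $\{f\}$ would force $g\in\cl(\{f\})$, making $g$ a loop).

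With $f,g$ non-loops, Lemma~\ref{freermap} gives that $\varphi_{gf}$ is a rank-preserving weak map from $M/f$ to $M/g$. Since $b(e;M)=b(M/e)$ for non-loops $e$, the hypothesis reads $T(M/f;1,1)=T(M/g;1,1)$, and because $1+1>1\cdot1$ the ``moreover'' clause of Theorem~\ref{main} forces $M/f\cong M/g$. The crucial step is then to promote this to the assertion that $f$ and $g$ are clones in $M$, which I would do in two moves. First, a rank-preserving weak map between matroids with the same number of bases must be an isomorphism: pulling bases back along $\varphi_{gf}$ is an injection from $\B(M/g)$ to $\B(M/f)$, which are now finite sets of equal size, hence a bijection, so $\varphi_{gf}$ carries bases to bases in both directions and is an isomorphism. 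Second, since $\varphi_{gf}$ is the identity on $E(M)-\{f,g\}$, it lifts: using $\B(M/f)=\{B-f:B\in\B(M),\ f\in B\}$ and the analogous description for $g$, the transposition $\sigma$ of $f$ and $g$ sends each basis of $M$ to a basis of $M$ (bases containing both or neither of $f,g$ are fixed; for a basis $B$ with $f\in B$ and $g\notin B$, one has $B-f\in\B(M/f)$, so $\varphi_{gf}(B-f)=B-f\in\B(M/g)$, whence $\sigma(B)=(B-f)\cup g\in\B(M)$; the other case is symmetric via $\varphi_{gf}^{-1}=\varphi_{fg}$). As $\sigma$ is an involution of $2^{E(M)}$ carrying the finite set $\B(M)$ into itself, it permutes $\B(M)$, so $\sigma$ is an automorphism and $f,g$ are clones.

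I expect the main obstacle to be exactly this promotion step: the hypothesis only delivers the abstract isomorphism $M/f\cong M/g$, whereas the conclusion is the far more rigid statement that a single prescribed transposition of $E(M)$ is an automorphism of $M$; the bridge is the observation that $\varphi_{gf}$ is itself an isomorphism and is the identity away from $\{f,g\}$. As a remark, the converse also admits a self-contained proof avoiding Theorem~\ref{main}: for a basis $B$ of $M$ with $g\in B$ and $f\notin B$, the set $(B-g)\cup f$ is again a basis, since otherwise its fundamental circuit $C$ through $f$ would satisfy $g\in\cl(C)\subseteq\cl(B-g)$, contradicting independence of $B$; this gives an injection from the bases containing $g$ but not $f$ into those containing $f$ but not $g$, which $b(f;M)=b(g;M)$ forces to be a bijection, and one concludes as above.
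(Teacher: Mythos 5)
Your proof is correct, and it takes a genuinely different route from the paper's. The paper proves the converse by induction on $|E(M)|$: after disposing of loops, coloops, and the small base cases, it picks an element $e\in E(M)-\{f,g\}$, uses deletion--contraction together with Lemma~\ref{restriction} and Corollary~\ref{cnt}(i) to force $b(M/f\backslash e)=b(M/g\backslash e)$ and $b(M/f/e)=b(M/g/e)$, concludes by induction that $f$ and $g$ are clones in every $M\backslash e$, and finally derives a contradiction by producing a circuit $C$ containing $g$ with $f\notin\cl_M(C)$ whose closure must be all of $E(M)-\{f\}$, making $f$ a coloop. Your argument is non-inductive and considerably shorter: the key point is that when $b(M/f)=b(M/g)$, the specific rank-preserving weak map $\varphi_{gf}:M/f\to M/g$ is already an isomorphism, because $\varphi_{gf}^{-1}$ injects $\B(M/g)$ into $\B(M/f)$ and a count-preserving injection between finite sets is a bijection; since $\varphi_{gf}$ is the identity away from $\{f,g\}$, this isomorphism of contractions lifts directly to the statement that the transposition $\sigma=(f\ g)$ carries $\B(M)$ into $\B(M)$, hence permutes it. One small stylistic point: once you run this counting argument, the earlier appeal to Theorem~\ref{main} to get the abstract isomorphism $M/f\cong M/g$ is redundant --- you never use the abstract isomorphism, only the concrete one you construct. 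Your closing remark sharpens this further into a fully self-contained proof that bypasses both Theorem~\ref{main} and Lemma~\ref{freermap}: the single observation that $(B-g)\cup f$ is a basis whenever $B$ is a basis with $g\in B$, $f\notin B$ (via $g\in\cl(C)=\cl(C-f)\subseteq\cl(B-g)$ for the would-be fundamental circuit $C$) already gives the injection between $\{B\in\B(M):g\in B,\ f\notin B\}$ and $\{B\in\B(M):f\in B,\ g\notin B\}$ whose forced bijectivity makes $\sigma$ an automorphism. That version is arguably the cleanest available proof of this theorem and strips it of any dependence on the Tutte-polynomial machinery that motivates the rest of the paper.
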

        \begin{proof}
            If $f$ and $g$ are clones in $M$, then clearly $b(f;M)=b(g;M)$. 
            To prove the converse, suppose $b(f;M)=b(g;M)$. 
            First assume that $g$ is a loop of $M$.             
            Then $b(g;M)=0=b(f;M)$. 
            Thus $f$ is a loop of $M$. 
            Therefore $f$ and $g$ are clones in $M$.
            Similarly, if $g$ is a coloop of $M$, then $f$ and $g$ are clones in $M$. 
            We may assume that $f$ and $g$ are neither loops nor coloops. 
            Thus $b(f;M)=b(M/f)$ and $b(g;M)=b(M/g)$. 

            Let $|E(M)|\in \{2,3\}$. 
            Since $f$ and $g$ are not loops or coloops in $M$, we have that $M\in \{U_{1,2}, U_{2,3}, U_{1,3},U_{1,2}\oplus U_{0,1}, U_{1,2}\oplus U_{1,1}\}$. It is straightforward to check that, in these cases, $f$ and $g$ are clones. 

            Assume the result holds for $|E(M)|<n$ and let $|E(M)|=n\geq 4$. 
            Suppose $f$ and $g$ are not clones in $M$. 
            Take an element $e\in E(M)-\{f,g\}$.
            If $e$ is a loop or a coloop in $M$, then $b(f;M\del e)=b(f;M)$ and $b(g;M\del e)=b(g;M)$. 
            Thus $b(f;M\del e)=b(g;M\del e)$. 
            By the induction assumption, $f$ and $g$ are clones in $M\del e$.  
            Hence $f$ and $g$ are clones in $M$, a contradiction. 
            Thus $e$ is neither a loop nor a coloop of $M$. 

            Suppose $\{e,f\}$ is a circuit of $M$. 
            Then $\{e,f,g\}$ is contained in a parallel class since $f$ is freer than $g$ and $g$ is not a loop in $M$, so $f$ and $g$ are clones of $M$.
            Thus we may assume that $e$ is not a loop of $M/f$.
            If $\{e,g\}$ is a circuit of $M$, then $e$ is a loop in $M/g$, so $b(M/g)=b(M/g\del e)$. 
            Therefore $b(M/f\del e)+b(M/f/e)=b(M/g\del e)$. 
            By Lemma \ref{restriction}, $b(M/f\del e)\geq b(M/g\del e)$, so $b(M/f)>b(M/g)$, a contradiction.  

            Now $e$ is not a loop or a coloop of $M$, $M/f $, or $M/g$ and it follows that $b(M/f)=b(M/f\del e)+b(M/f/e)$ and $b(M/g)=b(M/g\del e)+b(M/g/e)$.
            By assumption,  
            \[
                b(M/f\del e)+b(M/f/e)=b(M/g\del e)+b(M/g/e).  
            \] 
            By Lemma \ref{restriction}, $b(M/f\del e)\geq b(M/g\del e)$. 
            Thus $b(M/f/e)\leq b(M/g/e)$.
            Since $f$ is freer than $g$ in $M/e$, it follows, by Corollary \ref{cnt}(i), that $b(M/f/e)=b(M/g/e)$.
            Consequently, $b(M/f\del e)=b(M/g\del e)$. 
            Therefore, by the induction assumption, $f$ and $g$ are clones in $M\del e$.

            Since $f$ and $g$ are not clones in $M$, there is a circuit $C$ of $M$ containing $g$ such that $f\not\in \cl_M(C)$.  
            Assume there is an element $e\in E(M)-(C\cup f)$.
            Then $C$ is a circuit of $M\del e$ containing $g$ such that $f\not\in \cl_{M\del e}(C)$. 
            Hence $f$ and $g$ are not clones in $M\del e$, a contradiction. 
            It follows that $C=E(M)-\{f\}$. Thus $r_M(C)=r(M\del f)=r(M)-1$. Therefore $f$ is a coloop of $M$, a contradiction.   
        \end{proof} 

\begin{proposition}\label{hypclones}
    Let $f$ be freer than $g$ in $M$. Let $L$ be obtained from $M$ by deleting every element of $E(M)-\{f,g\}$ that is parallel to $g$. Then $h(f;M)=h(g;M)$ if and only if $f$ and $g$ are clones in $L$. 
\end{proposition}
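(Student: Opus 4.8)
The plan is to mimic the proof of Theorem~\ref{beq}: first reduce to the matroid $L$, then peel off loops, coloops and redundant parallel elements, and finally prove a strict inequality in the resulting reduced case. Throughout I would use the identity $h(e;M)=h(M/e)$, valid whenever $e$ is not a loop of $M$.

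\emph{Reduction to $L$.} I would first show that if $e\in E(M)-\{f,g\}$ is parallel to $g$, then $h(f;M)=h(f;M\del e)$ and $h(g;M)=h(g;M\del e)$. Since $\cl_M(\{e\})=\cl_M(\{g\})$, a flat of $M$ contains $e$ precisely when it contains $g$; from this one checks that $H\mapsto H-e$ is a bijection from the hyperplanes of $M$ to the hyperplanes of $M\del e$---a hyperplane of $M$ avoiding $e$ also avoids $g$ and is already a hyperplane of $M\del e$; a hyperplane $H$ of $M$ containing $e$ contains $g$, and $H-e$ is a hyperplane of $M\del e$ containing $g$; and every hyperplane of $M\del e$ arises in exactly one of these ways. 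This bijection respects containment of $f$ and of $g$, which gives the claim. Since $f$ is still freer than $g$ in $M\del e$, iterating over all elements of $E(M)-\{f,g\}$ parallel to $g$ reduces the problem to the following: \emph{if $g$ has no parallel element in $M$ other than possibly $f$, then $h(f;M)=h(g;M)$ if and only if $f$ and $g$ are clones in $M$.}

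\emph{Easy cases and the induction.} If $f$ is parallel to $g$, they are clones and the equality is immediate. If $g$ is a loop, then $g$ lies in every hyperplane, so equality forces $f$ into every hyperplane, hence $f$ is a loop and $f,g$ are clones. If $g$ is a coloop, then, because a coloop lies in no cyclic flat and $f$ is freer than $g$, $f$ lies in no cyclic flat, so $f$ is a coloop and $f,g$ are clones (and the equality holds). So I may assume $f$ and $g$ are non-parallel and neither is a loop or a coloop; since $f$ is freer than $g$ and $g$ is not a loop, $f$ then has no parallel element either. The forward implication being trivial, I would assume $h(f;M)=h(g;M)$ and induct on $|E(M)|$, using the reformulation that $f$ and $g$ are clones if and only if, for every $I\subseteq E(M)-\{f,g\}$, the set $I\cup f$ is independent exactly when $I\cup g$ is; since $f$ is freer than $g$, one of the two implications always holds, so if $f,g$ are not clones some such $I$ has $I\cup f$ independent and $I\cup g$ dependent. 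If some $e\in E(M)-\{f,g\}$ is a loop, a coloop, or one of a parallel pair inside $E(M)-\{f,g\}$, then $M\del e$ inherits all our hypotheses and $h(f;M\del e)=h(g;M\del e)$ (both sides track $M$, except they each drop by $1$ when $e$ is a coloop), so by induction $f$ and $g$ are clones in $M\del e$; as adding such an $e$ back preserves being clones, this contradicts our assumption. Hence $M$ is simple and coloopless.

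\emph{The crux.} It remains---and this is where I expect the real difficulty---to show that in this reduced case, if $f$ and $g$ are not clones then in fact $h(f;M)>h(g;M)$, contradicting the assumed equality. Since $f$ is freer than $g$ but not conversely, $M$ has a cyclic flat $F$ with $g\in F$ and $f\notin F$, and, $g$ having no parallel, $r_M(F)\geq 2$. Equivalently (using $h(f;M)=h(M/f)$ and $h(g;M)=h(M/g)$), $\varphi_{gf}$ is a rank-preserving weak map between the loopless coloopless matroids $M/f$ and $M/g$ that is not an isomorphism, and one must prove $h(M/f)>h(M/g)$. The obstruction is that the number of hyperplanes satisfies no deletion--contraction recursion, so the counting that finished Theorem~\ref{beq} does not apply. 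I would try to sharpen the injection from the hyperplanes of $M$ through $g$ into the hyperplanes of $M$ through $f$ that underlies Corollary~\ref{cnt}(iii)---dually, a comparison of the cocircuits avoiding $f$ with those avoiding $g$---and use the cyclic flat $F$ to exhibit a hyperplane through $f$ that is not in the image. A further induction inside the simple coloopless case, organised by how the hyperplanes of $M$ meet $F$, or else a characterisation of the equality case in \cite[Proposition~9.3.3]{kung}, looks like the most promising way to carry this out; this strict comparison is the heart of the proposition.
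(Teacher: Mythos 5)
Your reduction to $L$ and the disposal of loop/coloop/parallel cases are sound, and your further reduction to a simple coloopless $M$ (while not what the paper does, and more than is needed) is also correct. However, the proof is genuinely incomplete: you explicitly flag ``the crux'' --- showing $h(f;M)>h(g;M)$ in the reduced non-clone case --- as the real difficulty and then only gesture at it without carrying it out. That step is indeed the entire content of the proposition, and the ideas you float (``sharpen the injection,'' ``use $F$ to exhibit a hyperplane not in the image'') stop well short of a proof.

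For comparison, the paper's argument is direct and does not pass through a simple/coloopless reduction or an auxiliary induction. It assumes (after discarding the case that $\{f,g\}$ is a $2$-circuit) that no $2$-circuit of $L$ contains $g$, and defines a map $\psi$ from $\mathcal{H}(L;g;\overline f)$ to $\mathcal{H}(L;f;\overline g)$ by choosing, for each hyperplane $J$ through $g$ and not $f$, a basis $B_J$ of $J$ containing $g$ and setting $\psi(J)=\cl_L(B_J-g)\cup f$. The freedom hypothesis is what makes $\psi(J)$ a hyperplane through $f$ and not $g$; injectivity follows from a rank count on $J_1\cap J_2$. The equality $h(f;M)=h(g;M)$ upgrades $\psi$ to a bijection, and bijectivity forces the structural fact that $g$ is a coloop of $L|J$ for every $J\in\mathcal{H}(L;g;\overline f)$: otherwise a second basis $B'_J$ of $J$ through $g$ would produce a hyperplane $\cl_L(B'_J-g)\cup f$ in $\mathcal{H}(L;f;\overline g)$ outside the image. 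Finally, if $f,g$ were not clones in $L$, a cyclic flat $K$ containing $g$ and avoiding $f$ would extend to a basis $B_L$ with $\cl_L(B_L-f)$ a hyperplane through $g$, not $f$, in which $g$ is \emph{not} a coloop (a circuit of $K$ through $g$ lies inside it), a contradiction. This construction --- the closure-of-a-basis-minus-$g$ device, the injectivity/bijectivity step, and especially the ``$g$ is a coloop in every such $J$'' lemma --- is precisely the mechanism your proposal is missing. Without it, your inequality $h(f;M)>h(g;M)$ is unproved, so the converse direction of the proposition is not established.
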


\begin{proof}
    Suppose $f$ and $g$ are clones in $L$.
    As $h(f;M)=h(f;L)$ and $h(g;M)=h(g;L)$, we have $h(f;M)=h(g;M)$. 
    To prove the converse, suppose that $h(f;M)=h(g;M)$. 
    Let $\mathcal{H}(M;f;\overline g)$ be the set of hyperplanes of $M$ containing $f$ but not $g$. 
    Then $|\mathcal{H}(M;f;\overline g)|=|\mathcal{H}(M;g;\overline f)|$.
    Hence $|\mathcal{H}(L;f;\overline g)|=|\mathcal{H}(L;g;\overline f)|$.
    Clearly, if $\{f,g\}$ is a $2$-circuit of $L$, then $f$ and $g$ are clones in $L$. 
    Thus, we may assume that no 2-circuit of $L$ contains $g$.

    For $J\in \mathcal{H}(L;g;\overline f)$, let $B_J$ be an arbitrarily chosen basis of $J$ containing $g$. 
    Then $\cl_{L}(B_J-g)$ is a rank-$(r-2)$ flat of $L$. 
    Let $\cl_{L}(B_J-g)\cup f=F$. 
    Then $r(F)=r-1$ otherwise $f\in \cl_{L}(B_J-g)$, so $g\in \cl_{L}(B_J-g)$, a contradiction. 
    Consider $\cl_{L}(F)$. 
    Assume $f$ is not a coloop of $\cl_{L}(F)$. 
    Then $\cl_{L}(F)$ contains a circuit $C$ containing $f$. 
    Since $f$ is freer than $g$, we see that $g\in \cl_{L}(F)$. 
    Then $\cl_{L}(F)\supseteq B_J$. 
    Thus $\cl_{L}(F)\supseteq J$. 
    As $r(\cl_{L}(F))=r(J)$, we deduce that $\cl_{L}(F)=J$. 
    But $f\not\in J$, a contradiction. 
    Thus $f$ is a coloop of $\cl_{L}(F)$, so $\cl_{L}(B_J-g)\cup f\in \mathcal{H}(L;f;\overline g)$. 
    Let $\psi(J)=\cl_{L}(B_J-g)\cup f$. 
    Note that $\psi$ depends upon the choices made for the bases $B_J$. Moreover, $\psi$ maps $\mathcal{H}(L;g;\overline f)$ to $\mathcal{H}(L;f;\overline g )$.  

    \begin{sublemma}
       $\psi$ is bijective. 
    \end{sublemma}

    To see that $\psi$ is injective, suppose that, for distinct members $J_1$ and $J_2$ of $\mathcal{H}(L;g;\overline f)$, the hyperplanes $\psi(J_1)$ and $\psi(J_2)$ are equal. 
    Then $\cl_{L}(B_{J_1}-g)=\cl_{L}(B_{J_2}-g)$. 
    Now the rank-$(r-2)$ flat $J_1\cap J_2$ contains $g$ and so contains the rank-$(r-1)$ set $B_{J_1}$, a contradiction. 
    Since $|\mathcal{H}(L;g;\overline f)|=|\mathcal{H}(L;f;\overline g)|$ and $\psi$ is injective, we conclude that (12.1) holds.

    \begin{sublemma}
        $g$ is a coloop of $L|J$ for every $J\in \mathcal{H}(L;g;\overline f)$.  
    \end{sublemma}

    Suppose $g$ is not a coloop of $L|J$. 
    Then $\cl_{L}(B_J-g)$ is a subset of $J$ avoiding $g$. 
    As $g$ is not a coloop, there is an element $h$ of $J-\cl_{L}(B_J-g)-g$. 
    Since no 2-circuit of $L$ contains $g$, the elements $g$ and $h$ are not parallel. 
    Thus $\{g,h\}$ is independent. 
    Extend $\{g,h\}$ to a basis $B_J'$ of $L|J$. 
    Then $\cl_{L}(B_J'-g)\neq\cl_{L}(B_J-g)$ because $h\in \cl_{L}(B_J'-g)-\cl_{L}(B_J-g)$. 
    Thus $\cl_{L}(B_J'-g)\cup f$ is a member of $\mathcal{H}(L;f;\overline g)$ that is not in the set $\psi(\mathcal{H}(L;g;\overline f))$.
    As $|\mathcal{H}(L;g;\overline f)|=|\mathcal{H}(L;f;\overline g)|$ and $\psi$ is a bijection, this is a contradiction. Thus (12.2) holds. 

    \smallskip

    Suppose $g$ is not freer than $f$ in $L$.
    Then $L$ has a cyclic flat $K$ containing $g$ and avoiding $f$. 
    Take a basis $B$ for $K$ and consider $B \cup f$. 
    Extend $B\cup f$ to get a basis $B_{L}$ for $L$. 
    Then $\cl_L(B_{L}-f)$ is a hyperplane of $L$ containing $g$ and avoiding $f$. 
    Moreover, since $K\subseteq \cl_L(B_{L}-f)$, the hyperplane $\cl_L(B_{L}-f)$ has a circuit containing $g$; that is, $g$ is not a coloop of $\cl_L(B_{L}-f)$, a contradiction to (12.2).
       \end{proof} 

       The next corollary is obtained by applying Proposition \ref{hypclones} to $M^*$. 

\begin{corollary}\label{ccteq}
    Let $f$ be freer than $g$ in $M$. 
    Let $N$ be obtained from $M$ by contracting every element of $E(M)-\{f,g\}$ that is in series with $f$. 
    Then $\gamma(f;M)=\gamma(g;M)$ if and only if $f$ and $g$ are clones in $N$.     
\end{corollary}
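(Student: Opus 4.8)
The plan is to dualize Proposition \ref{hypclones}. First I would invoke Duke's observation, recorded in the excerpt, that since $f$ is freer than $g$ in $M$, the element $g$ is freer than $f$ in $M^*$. Next I would translate the construction of $N$ across the dual: an element of $E(M)-\{f,g\}$ is in series with $f$ in $M$ precisely when it is parallel to $f$ in $M^*$, and contracting an element of $M$ is the same as deleting it from $M^*$. Hence, if $L$ denotes the matroid obtained from $M^*$ by deleting every element of $E(M^*)-\{f,g\}$ that is parallel to $f$, then $L=N^*$. Both $f$ and $g$ survive all of these operations, so they are elements of $L$ and of $N$.

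Then I would apply Proposition \ref{hypclones} to $M^*$, with the two distinguished elements being $g$ (the freer one) and $f$ (the less free one); the proposition deletes exactly those elements of $E(M^*)-\{f,g\}$ parallel to the less free element, which is our construction of $L$. This gives: $h(g;M^*)=h(f;M^*)$ if and only if $f$ and $g$ are clones in $L=N^*$. Finally I would cash this out on the $M$-side. The hyperplanes of $M^*$ are exactly the complements of the circuits of $M$, so $h(e;M^*)=\gamma(M)-\gamma(e;M)$ for every $e\in E(M)$; consequently $h(f;M^*)=h(g;M^*)$ if and only if $\gamma(f;M)=\gamma(g;M)$. Moreover, any permutation of the ground set that is an automorphism of $N$ is also an automorphism of $N^*$, so $f$ and $g$ are clones in $N^*$ if and only if they are clones in $N$. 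Chaining these equivalences yields the corollary.

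The only real obstacle is the duality bookkeeping: one must keep straight that dualizing swaps which of $f$, $g$ plays the ``freer'' role in Proposition \ref{hypclones}; one must verify that the class of elements removed to form $N$ (those in series with $f$ in $M$) dualizes to the class removed in the proposition (those parallel to the less free element $f$ in $M^*$), so that indeed $L=N^*$; and one must confirm that the equality of hyperplane counts in $M^*$ is equivalent to the equality of circuit counts in $M$ and that being clones is a self-dual condition. Once these translations are pinned down, no further work is needed.
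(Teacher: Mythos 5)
Your proof is correct and takes essentially the same route as the paper: dualize so that $g$ is freer than $f$ in $M^*$, observe that the matroid $L$ produced by Proposition~\ref{hypclones} applied to $M^*$ is precisely $N^*$, and invoke that proposition together with the self-duality of the clone relation. The paper's write-up routes the count translation through $N$ and then $N^*$ via complementation of circuits, whereas you translate directly via $h(e;M^*)=\gamma(M)-\gamma(e;M)$, but this is a cosmetic difference in bookkeeping rather than a different argument.
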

\begin{proof}
    Clearly $\gamma(f;N)=\gamma(g;N)$ if $f$ and $g$ are clones in $N$.  
    Hence $\gamma(f;M)=\gamma(g;M)$. 
    To prove the converse, suppose that $\gamma(f;M)=\gamma(g;M)$. 
    Then $\gamma(f;N)=\gamma(g;N)$. 
    If $f$ and $g$ are in series in $N$, then $f$ and $g$ are clones in $N$.  
    Thus, we will assume that $f$ and $g$ are not a series pair in $N$.  
    
    Let $\mathcal{C}(M;f,\overline g)$ be the set of circuits of $M$ containing $f$ and avoiding $g$. 
    Then $|\mathcal{C}(N;f;\overline g)|=|\mathcal{C}(N;g;\overline f)|$. 
    By duality, $|\mathcal{C}(N;f,\overline g)|=|\mathcal{H}(N^*;g;\overline f)|$.
    Therefore $|\mathcal{H}(N^*;g;\overline f)|=|\mathcal{H}(N^*;f;\overline g)|$, and, consequently, $h(g;M^*)=h(f;M^*)$. 
    Since $g$ is freer than $f$ in $M^*$, by Proposition \ref{hypclones}, we see that $f$ and $g$ are clones in $M^*\del X$ where $X$ is the set of elements of $E(M)-\{f,g\}$ that are parallel to $f$ in $M^*$.
    It follows that $f$ and $g$ are clones in $N$. 
\end{proof}

Based on Theorem \ref{beq}, Proposition \ref{hypclones}, and Corollary \ref{ccteq}, one may guess that, when $f$ is freer than $g$ in $M$ and $\gamma'(f;M) = \gamma'(g;M)$, the elements $f$ and $g$ must be clones in $M$ when $f$ is not in any $2$-cocircuit of $M$. 
To see that this is not so, let $M$ be the rank-$5$ matroid that is obtained by taking the 2-sum across a common basepoint $p$ of two 4-point lines $M_2$ and $M_3$ and of a 6-element rank-$3$ matroid $M_1$ that has $\{g,a,b\}$ as its only non-spanning circuit and has $f$, $p$, and $c$ as free elements. 
Then $f$ is freer than $g$ in $M$ and $\gamma'(f;M)=0=\gamma'(g;M)$. But $f$ and $g$ are not clones in the cosimple matroid $M$.

The \textit{truncation} of $M$, which we will denote $\tau(M)$, is the matroid obtained from $M$ by taking the free extension $M+_{E(M)}e$ of $M$ by $e$ and then contracting the free element $e$. In particular, when $r(M)>0$, the independent sets of $\tau(M)$ are the independent sets of $M$ with at most $r(M)-1$ elements. Note that we use $\tau(M)$ rather than the more standard $T(M)$ to denote truncation in order to avoid confusion with the Tutte polynomial of $M$. 

\begin{corollary}\label{trunc}
    Let $r(M)=r\geq 2$. If $f$ is freer than $g$ in $M$, then $W_{r-2}(f;M)=W_{r-2}(g;M)$ if and only if $f$ and $g$ are clones in the matroid obtained from $\tau(M)$ by deleting every element of $E(M)-\{f,g\}$ that is parallel to $g$.  
\end{corollary}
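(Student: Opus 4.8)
The plan is to reduce Corollary~\ref{trunc} to Proposition~\ref{hypclones} applied to the truncation $\tau(M)$. The first ingredient is a structural observation about truncation: since $r(M)=r\ge 2$, the matroid $\tau(M)$ has rank $r-1$, and because $r_{\tau(M)}(Y)=\min(r_M(Y),r-1)$ for every $Y\subseteq E(M)$, the rank functions of $M$ and $\tau(M)$ agree on all sets of size at most $r-1$, and $\cl_{\tau(M)}(X)=\cl_M(X)$ whenever $r_M(X)\le r-2$. Hence the rank-$(r-2)$ flats of $\tau(M)$ are exactly the rank-$(r-2)$ flats of $M$, and since $r(\tau(M))=r-1$ these are precisely the hyperplanes of $\tau(M)$. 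In particular $W_{r-2}(f;M)=h(f;\tau(M))$ and $W_{r-2}(g;M)=h(g;\tau(M))$, so the desired equality $W_{r-2}(f;M)=W_{r-2}(g;M)$ is equivalent to $h(f;\tau(M))=h(g;\tau(M))$.

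The second ingredient is that $f$ is freer than $g$ in $\tau(M)$. I would verify this directly from the definition. Every circuit of $\tau(M)$ has at most $r$ elements. Let $C$ be a circuit of $\tau(M)$ with $f\in C$. If $|C|=r$, then $C$ is spanning in $\tau(M)$, so $\cl_{\tau(M)}(C)=E(M)\ni g$. If $|C|\le r-1$, then $C$ is a circuit of $M$ (the rank functions agree on such sets), so $r_M(C)=|C|-1\le r-2$ and, since $f$ is freer than $g$ in $M$, we get $g\in\cl_M(C)=\cl_{\tau(M)}(C)$ by the first ingredient. Either way $g\in\cl_{\tau(M)}(C)$, so $f$ is freer than $g$ in $\tau(M)$. (Alternatively one can realize $\tau(M)=(M+_{E(M)}e)/e$, note that relative freedom survives free extension by inspecting cyclic flats, and apply Duke's result that it survives contraction; but the circuit argument above is shorter.)

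Now Proposition~\ref{hypclones}, applied with $\tau(M)$ in the role of $M$, gives that $h(f;\tau(M))=h(g;\tau(M))$ if and only if $f$ and $g$ are clones in the matroid obtained from $\tau(M)$ by deleting every element of $E(M)-\{f,g\}$ that is parallel to $g$ in $\tau(M)$; combining this with the first ingredient yields the corollary. I do not expect a genuine obstacle: the argument is essentially bookkeeping about how truncation acts on flats and circuits of bounded rank, layered on the already-proved Proposition~\ref{hypclones}. The two points that need care are confirming that ``rank-$(r-2)$ flat'' means the same thing in $M$ and in $\tau(M)$ (so the hyperplane count transfers cleanly), and being precise that ``parallel to $g$'' in the statement refers to parallelism in $\tau(M)$ --- which when $r=2$ makes all non-loops of $\tau(M)$ mutually parallel, a degenerate case worth checking against the definitions but one the argument above already subsumes.
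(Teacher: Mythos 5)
Your proof is correct and follows essentially the same route as the paper: both identify the rank-$(r-2)$ flats of $M$ with the hyperplanes of $\tau(M)$ and then invoke Proposition~\ref{hypclones} applied to $\tau(M)$. The only real difference is that you spell out a verification that $f$ remains freer than $g$ in $\tau(M)$ (via the case analysis on circuit size), whereas the paper simply asserts this fact; your observation about the ambiguity of ``parallel to $g$'' when $r=2$ is also a fair point of care, and your reading (parallelism in $\tau(M)$) is the one consistent with the paper's proof.
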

    \begin{proof}
        Let $F$ be a rank-$(r-2)$ flat of $M$. 
        Then $F$ is a rank-$(r-2)$ flat of $M+_{E(M)}e$ avoiding $e$.
        Hence $F$ is a hyperplane of $\tau(M)$.
        Therefore, the rank-$(r-2)$ flats of $M$ containing an element $x$ are precisely the hyperplanes of $\tau(M)$ containing $x$, that is, $W_{r-2}(f;M)=W_{r-2}(g;M)$ if and only if $h(f;\tau(M))=h(g;\tau(M))$. 
        As $f$ is freer than $g$ in $\tau(M)$, the result follows immediately from Proposition \ref{hypclones}. 
    \end{proof}

    The following result generalizes Corollary \ref{trunc} to the \textit{i-th truncation} $\tau^i(M)$ of $M$, defined recursively by \mbox{$\tau^i(M)=\tau(\tau^{i-1}(M))$} where $\tau^0(M)=M$.     

    \begin{corollary}
        Suppose $f$ is freer than $g$ in $M$ and $r(M)=r\geq 2$. Then $W_{k}(f;M)=W_k(g;M)$ for some $k$ with $1\leq k\leq r-1$ if and only if $f$ and $g$ are clones in the matroid obtained from $\tau^{r-k-1}(M)$ by deleting every element of $E(M)-\{f,g\}$ that is parallel to $g$.
    \end{corollary}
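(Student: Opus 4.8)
The plan is to reduce the statement to Proposition \ref{hypclones} by induction on $r-k$, using the fact that a single truncation shifts the relevant flats down one level in rank, exactly as in the proof of Corollary \ref{trunc}.

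First I would record the flat correspondence. If $r(M)=r\geq 2$ and $0\leq k\leq r-2$, then a subset of $E(M)$ is a rank-$k$ flat of $M$ if and only if it is a rank-$k$ flat of $\tau(M)$: a rank-$k$ flat $F$ of $M$ with $k\leq r-2$ is a rank-$k$ flat of the free extension $M+_{E(M)}e$ avoiding $e$, so $F\cup e$ is a flat of $M+_{E(M)}e$ and hence $F$ is a rank-$k$ flat of $(M+_{E(M)}e)/e=\tau(M)$, and conversely every rank-$k$ flat of $\tau(M)$ with $k\leq r-2$ arises this way. In particular $W_k(x;M)=W_k(x;\tau(M))$ for every $x\in E(M)$ when $k\leq r-2$. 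I would also note, exactly as in Corollary \ref{trunc}, that $f$ is still freer than $g$ in $\tau(M)$: $f$ is freer than $g$ in $M+_{E(M)}e$, since every circuit of $M+_{E(M)}e$ containing $f$ is either a circuit of $M$ containing $f$ (whose closure contains $g$) or a spanning set (whose closure is all of $E(M)\cup e$), and relative freedom passes to minors, in particular to $(M+_{E(M)}e)/e=\tau(M)$.

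Now I would induct on $r-k$. If $r-k=1$, that is $k=r-1$, then $\tau^{r-k-1}(M)=M$ and $W_{r-1}(x;M)=h(x;M)$, so the claim is precisely Proposition \ref{hypclones}. If $r-k\geq 2$, so $k\leq r-2$ and hence $r\geq k+2\geq 3$, the flat correspondence gives $W_k(f;M)=W_k(f;\tau(M))$ and $W_k(g;M)=W_k(g;\tau(M))$; moreover $\tau(M)$ has rank $r-1\geq 2$, the index $k$ satisfies $1\leq k\leq (r-1)-1$ with $(r-1)-k=(r-k)-1$, and $f$ is freer than $g$ in $\tau(M)$. Applying the induction hypothesis to $\tau(M)$ shows that $W_k(f;\tau(M))=W_k(g;\tau(M))$ if and only if $f$ and $g$ are clones in the matroid obtained from $\tau^{(r-1)-k-1}(\tau(M))=\tau^{r-k-1}(M)$ by deleting every element of $E(M)-\{f,g\}$ parallel to $g$. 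Chaining this equivalence with the two equalities above yields the statement for $M$.

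I do not expect a genuine obstacle: the substantive content — the one-truncation flat correspondence and the stability of relative freedom under truncation — is already present in the $k=r-2$ case treated in Corollary \ref{trunc}, so the work is mostly in extending the flat correspondence to all $k\leq r-2$ and keeping the truncation-tower indices aligned. The only points worth stating explicitly are that ``parallel to $g$'' should be read in $\tau^{r-k-1}(M)$, matching the matroid to which Proposition \ref{hypclones} is ultimately applied, and that the exponent $r-k-1$ is nonnegative because $k\leq r-1$ while every matroid $\tau^i(M)$ arising in the induction has rank at least $k+1\geq 2$, so each truncation in the tower is well behaved.
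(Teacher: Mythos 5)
Your proof is correct and follows the route the paper intends: iterate truncation, match rank-$k$ flats of $M$ with rank-$k$ flats of $\tau(M)$ for $k\leq r-2$, and bottom out at Proposition~\ref{hypclones} when $k=r-1$. The paper's own proof is the one-liner ``this follows immediately from Corollary~\ref{trunc}''; your induction on $r-k$ simply makes the iteration explicit, including the check that relative freedom is preserved by truncation and that the rank bound $r(M)\geq 2$ is maintained down the tower.
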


    \begin{proof}
        This follows immediately from Corollary \ref{trunc}. 
    \end{proof}

\end{document}